\renewcommand{\leq}{\leqslant}
\renewcommand{\geq}{\geqslant}
\newcommand\widecheck[1]{%
\savestack{\tmpbox}{\stretchto{%
  \scaleto{%
    \scalerel*[\widthof{\ensuremath{#1}}]{\kern-.4pt\bigwedge\kern-.4pt}%
    {\rule[-\textheight/2]{1ex}{\textheight}}
  }{\textheight}%
}{0.5ex}}%
\stackon[2pt]{#1}{\scalebox{-1}{\tmpbox}}%
}
\numberwithin{equation}{section}
\def\stacksum#1#2{{\stackrel{{\scriptstyle #1}}
{{\scriptstyle #2}}}}
\newcommand{\sym}{\mathrm{sym}}
\newcommand{\Cc}{\mathbf{C}}
\newcommand{\Zz}{\mathbf{Z}}
\newcommand{\Rr}{\mathbf{R}}
\newcommand{\Gg}{\mathbf{G}}
\newcommand{\Fq}{{\mathbf{F}_q}}
\newcommand{\Fqt}{{\mathbf{F}^\times_q}}
\newcommand{\Ff}{\mathbf{F}}
\newcommand{\bFq}{\bar{\Ff}_q}
\newcommand{\mcV}{\mathcal{V}}
\newcommand{\mcO}{\mathcal{O}}
\newcommand{\HYPK}{\mathcal{K}\ell}
\newcommand{\mods}[1]{\,(\mathrm{mod}\,{#1})}
\newcommand{\what}{\widehat}
\newcommand{\ra}{\rightarrow}
\DeclareMathOperator{\Kl}{\mathrm{Kl}}
\DeclareMathOperator{\ft}{FT}
\DeclareMathOperator{\cond}{\mathbf{c}}
\DeclareMathOperator{\dual}{D}
\newcommand{\eps}{\varepsilon}
\renewcommand{\rho}{\varrho}
\DeclareMathOperator{\SL}{SL}
\DeclareMathOperator{\GL}{GL}
\newcommand{\demi}{{\textstyle{\frac{1}{2}}}}
\DeclareMathSymbol{\gena}{\mathord}{letters}{"3C}
\DeclareMathSymbol{\genb}{\mathord}{letters}{"3E}
\theoremstyle{plain}
\newtheorem{theorem}{Theorem}[section]
\newtheorem*{theorem*}{Theorem}
\newtheorem{lemma}[theorem]{Lemma}
\newtheorem{corollary}[theorem]{Corollary}
\newtheorem{proposition}[theorem]{Proposition}
\theoremstyle{remark}
\theoremstyle{definition}
\newtheorem{remark}[theorem]{Remark}
\newcommand{\mcL}{\mathcal{L}}
\newcommand{\rmP}{\mathrm{P}}
\newcommand{\rmL}{\mathrm{L}}
\newcommand{\mcF}{\mathcal{F}}
\newcommand{\mcG}{\mathcal{G}}
\newcommand{\vphi}{\varphi}
\renewcommand{\geq}{\geqslant}
\renewcommand{\leq}{\leqslant}
\renewcommand{\Re}{\mathfrak{Re}\,}
\newcommand{\ov}[1]{\overline{#1}}
\newcommand{\norm}[1]{\|{#1}\|}
\newcommand\sumsum{\mathop{\sum\sum}\limits}
\begin{document}

\title{Periodic twists of $\GL_3$-automorphic forms}

\author{Emmanuel  Kowalski}
\address{ETHZ, Switzerland }
\email{kowalski@math.ethz.ch}

\author{Yongxiao Lin}
\address{EPFL/MATH/TAN, Station 8, CH-1015 Lausanne, Switzerland }
\email{yongxiao.lin@epfl.ch}

\author{Philippe Michel}
\address{EPFL/MATH/TAN, Station 8, CH-1015 Lausanne, Switzerland }
\email{philippe.michel@epfl.ch}

\author{Will Sawin}
\address{Columbia University, USA }
\email{sawin@math.columbia.edu}

\date{\today,\ \thistime} 

\subjclass[2010]{11F55,11M41,11L07, 11T23, 32N10}

\keywords{Automorphic forms on $\GL_3$, Fourier coefficients, Hecke
  eigenvalues, discrete Fourier transform, trace functions,
  subconvexity}

\begin{abstract}
  We prove that sums of length about $q^{3/2}$ of Hecke eigenvalues of
  automorphic forms on~$\SL_3(\Zz)$ do not correlate with $q$-periodic
  functions with bounded Fourier transform. This generalizes the earlier
  results of Munshi and Holowinsky--Nelson, corresponding to
  multiplicative Dirichlet characters, and applies in particular to
  trace functions of small conductor modulo primes.
\end{abstract}

\thanks{Y. L., Ph.\ M.\ and E.\ K.\ were partially supported by a
  DFG-SNF lead agency program grant (grant number
  200020L\_175755). W. S. was partially supported by the Clay Mathematics
  Institute. \today\ \currenttime}

\maketitle 


\section{Introduction}

Let $\vphi$ be a cusp form for $\SL_3(\Zz)$ which is an eigenfunction
of all Hecke operators.
For any prime number~$q$ and any primitive Dirichlet character~$\chi$
modulo~$q$, we can then define the twisted $L$-function
$L(\varphi\otimes\chi,s)$, which is an entire function satisfying a
functional equation relating $s$ to $1-s$.
In a recent breakthrough, Munshi~\cite{Munshi,Munshi1} solved the
subconvexity problem for these twisted $L$-functions
$L(\vphi\otimes \chi,s)$ in the conductor aspect:

\begin{theorem}[Munshi]\label{th-munshi}
  Let~$s$ be a complex number such that $\Re s=1/2$. For any
  prime~$q$, any primitive Dirichlet character~$\chi$ modulo~$q$, and
  for any~$\eps>0$, we have
\begin{equation}\label{eq:subconvex}
  L(\vphi\otimes \chi,s)\ll q^{3/4-1/308+\eps},
\end{equation}
where the implied constant depends on $\varphi$, $s$ and~$\eps$.
\end{theorem}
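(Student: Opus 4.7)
The starting point is the approximate functional equation, which reduces bounding $L(\varphi\otimes\chi, s)$ at $\Re s = 1/2$ to bounding dyadic sums
$$S(N) = \sum_{n} \lambda_\varphi(n)\chi(n) V(n/N)$$
for smooth bump functions $V$ and $N\ll q^{3/2+\eps}$. The convexity bound comes from the trivial estimate $S(N)\ll_\varphi N^{1+\eps}$, so subconvexity requires saving a fixed positive power of $q$ in the critical range $N\asymp q^{3/2}$.

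The plan is to apply a circle method to separate the oscillation of $\lambda_\varphi$ from that of $\chi$, then dualize the $\lambda_\varphi$-variable via the $\GL_3$ Voronoi formula of Miller--Schmid and dualize the auxiliary variable via Poisson summation, reducing the whole problem to an estimate for an algebraic exponential sum modulo $q$. Concretely, introduce an auxiliary variable $m$ of the same size as $n$ and write
$$S(N) \;=\; \sum_{n,m}\lambda_\varphi(n)\chi(m)\,V(n/N)W(m/N)\,\delta(n=m),$$
for a normalized bump function $W$. Using the ``conductor lowering'' identity $\chi(n) = \chi(m)$ whenever $n\equiv m\pmod q$, one can replace $\delta(n=m)$ by the product $\mathbf 1_{n\equiv m\!\!\pmod q}\cdot\delta(n=m)$; detecting the residual equality by the Duke--Friedlander--Iwaniec delta symbol with moduli $c\leq K$ then effectively introduces moduli of size $qK$ into the exponentials but with the useful feature that the frequency lengths in the Voronoi and Poisson duals are controlled by $K$ rather than~$q$.

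The next step is to apply Cauchy--Schwarz in the $n$-variable so as to remove~$\lambda_\varphi$ (the second moment being handled by Rankin--Selberg), open the square, and then apply $\GL_3$-Voronoi on the inner $n$-sum and Poisson on the $m$-sum. After these transformations the problem is reduced to bounding families of complete exponential sums modulo~$q$ built out of $\chi$ and additive characters, together with a Hyper-Kloosterman-type factor coming from the $\GL_3$-Voronoi kernel; one seeks square-root cancellation for these, obtained from Deligne's form of the Riemann Hypothesis over finite fields. Choosing~$K$ as a small power of~$q$ so that the length of the dual sums matches the expected saving yields the stated exponent $q^{3/4-1/308+\eps}$.

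The main obstacle, and the technically deepest part, is the final exponential sum estimate: after dualization one is left with multi-variable sums on algebraic varieties modulo~$q$, and it is non-trivial to identify the generic orbit where square-root cancellation holds and to control the contribution of the degenerate loci (diagonal terms, $\chi$-twisted self-dual configurations, etc.) without losing the entire saving. The secondary difficulty is the balancing of parameters ($N$, $K$, the length of the Cauchy--Schwarz variable): the constraints from the Voronoi dual, the Poisson dual, and the diagonal contribution compete, and it is only at a single narrow choice that each term falls below the convexity threshold -- reflecting the small numerical value of the subconvex exponent $1/308$.
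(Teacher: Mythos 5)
The paper does not prove Theorem~\ref{th-munshi}: it is quoted from Munshi's work~\cite{Munshi,Munshi1}, and the only structural hint the paper itself gives is in Section~\ref{sec-principle}, where it characterizes Munshi's method as ``based on the use of the Petersson formula as a tool to express the delta symbol.'' Measured against that, your sketch misidentifies the central analytic device. You detect the equality $n=m$ via the Duke--Friedlander--Iwaniec delta method with integer moduli $c\leq K$; Munshi instead replaces the delta symbol by a $\GL_2$ spectral identity (the Petersson trace formula over a suitable family of newforms), which is precisely what made his paper novel and what Holowinsky and Nelson later stripped away. This is not a cosmetic difference: with the Petersson-formula delta, the ``moduli'' are the levels and Fourier coefficients of $\GL_2$ cusp forms, and the off-diagonal is governed by Kloosterman sums from the Petersson geometric side, whereas DFI produces a sum over plain integer moduli with Ramanujan-type weights. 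The exponential sums reached at the end, the diagonal/off-diagonal split, and the parameter optimization that produces the value $1/308$ are all tied to the Petersson structure, so a DFI-based run of the same outline would not be Munshi's proof (it would be closer in spirit to the ``trivial delta method'' variants of~\cite{AHLS}, which give a different exponent).

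A secondary problem is the order of operations in your sketch: you say you ``apply Cauchy--Schwarz in the $n$-variable so as to remove $\lambda_\varphi$'' and only afterwards ``apply $\GL_3$-Voronoi on the inner $n$-sum.'' Once Cauchy--Schwarz has removed $\lambda_\varphi(n)$, there is no $\GL_3$ coefficient left to which Voronoi could be applied; in Munshi's argument the $\GL_3$ Voronoi summation is applied to the $n$-variable \emph{before} Cauchy--Schwarz, then Poisson to the $\chi$-carrying variable, and Cauchy--Schwarz is used on the transformed sum to set up the final square-root-cancellation estimates. The ``conductor lowering'' step you mention ($\chi(n)=\chi(m)$ when $n\equiv m\ (\mathrm{mod}\ q)$) is indeed a genuine Munshi innovation and you have that part right, but as written the middle of the argument is out of order and built on the wrong delta device.
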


This result was recently analyzed in depth by Holowinsky and
Nelson~\cite{HN}, who discovered a remarkable simplification (and
strenghtening) of Munshi's ideas. They proved:

\begin{theorem}[Holowinsky--Nelson]\label{th-hn}
  With notation and assumptions as in Theorem~\emph{\ref{th-munshi}},
  we have
  \begin{equation}\label{eq:hn}
    L(\vphi\otimes \chi,s)\ll q^{3/4-1/36+\eps}
  \end{equation}
  where the implied constant depends on $\varphi$, $s$ and~$\eps$.
\end{theorem}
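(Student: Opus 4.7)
The plan is to follow the delta-symbol approach of Munshi, in the streamlined form due to Holowinsky and Nelson. Via the approximate functional equation for $L(\vphi\otimes\chi,s)$ on the critical line, it suffices to bound the smooth sums
\[
S(N) = \sum_n \lambda_\vphi(n)\,\chi(n)\, V(n/N),
\]
for $N \leq q^{3/2+\eps}$ and $V$ smooth and compactly supported. The trivial bound from Cauchy--Schwarz and the Rankin--Selberg estimate $\sum_{n\leq x}|\lambda_\vphi(n)|^2\ll x$ reads $S(N)\ll N$; this corresponds to the convexity bound $L\ll q^{3/4+\eps}$, so the goal is to save an extra factor of $q^{1/36}$ in $S(N)$.

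The first substantive step is to exploit the $q$-periodicity of $\chi$ via the discrete Fourier expansion
\[
\chi(n)=\frac{1}{\tau(\bar\chi)}\sum_{a\bmod q}\bar\chi(a)\,\eq(an),
\]
whose coefficients have unit modulus. Inserting this into $S(N)$ reveals a weighted superposition of twisted $\GL_3$ exponentials $\sum_n \lambda_\vphi(n)\eq(an) V(n/N)$. I would then apply $\GL_3$ Voronoi summation to each such inner sum: the resulting dual sum has length $q^3/N \asymp q^{3/2}$, and is weighted by dual Hecke eigenvalues $\overline{\lambda_\vphi(m)}$ and hyper-Kloosterman trace functions $\Kl_3(\cdot\,;q)$.

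The next step is to introduce a "conductor-lowering" averaging device so that a Cauchy--Schwarz application can gain cancellation. Concretely, I would perform a short smooth average over a parameter $m$ of size $M\ll q^{1/2}$ that preserves the $q$-periodic structure of $\chi$ (for example, a shift $n \mapsto n+mq$ inside $\lambda_\vphi$ and $V$, which leaves $\chi(n)$ unchanged), and apply Cauchy--Schwarz in the $m$-variable. Opening the square and applying a further Poisson or Voronoi transformation reduces matters to bounding correlation sums of the shape
\[
\sum_{x\bmod q}\Kl_3(f_1(x);q)\,\overline{\Kl_3(f_2(x);q)}\,\chi(g(x))
\]
for certain rational functions $f_1,f_2,g$ of $x$ depending on the auxiliary parameters.

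The main obstacle, and the technical heart of the argument, is to obtain square-root cancellation in such correlation sums. This requires $\ell$-adic cohomological input: Deligne's form of the Riemann Hypothesis over $\Fq$, together with Katz' analysis of the geometric monodromy of hyper-Kloosterman sheaves and the computation of invariants of the tensor-product sheaf defined by the integrand. Once square-root cancellation is available for generic parameters, balancing the length $N$ of the AFE sum against the averaging parameter $M$ optimizes the exponent and yields the saving $q^{-1/36}$. The generalization to arbitrary $q$-periodic weights with bounded Fourier transform --- the main novelty of the paper --- requires more flexible input on trace functions, but proceeds along the same architecture.
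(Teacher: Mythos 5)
Your proposal is not a variant proof of the Holowinsky--Nelson bound; it is an outline of a rather different (essentially Munshi-style, Voronoi-heavy, sheaf-theoretic) strategy, and as written it has gaps that I do not see how to close, and in any case it does not track the paper's actual argument at all.

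The central mechanism in the Holowinsky--Nelson method, as presented in this paper, is absent from your sketch. The paper realizes $K$ (here $\chi$) inside a one-parameter family $K(\cdot ,h)$, writes $S_V(K,X)$ as a difference $\mcF-\mcO$ after averaging over a bi-prime probability measure, and then treats the two pieces by quite different means. The crucial step in bounding $\mcF$ is Poisson in $h$ followed by \emph{reciprocity}, $e_q(np\overline{lr})\approx e_{rl}(-np\overline{q})$, which replaces the modulus $q$ by the much smaller modulus $rl$; only then is $\GL_3$ Voronoi applied, and it is applied with modulus~$rl/m$, so that only two-variable Kloosterman sums $\Kl_2(\cdot;rl/m)$ of small modulus appear. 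The complementary piece $\mcO$ is handled by Cauchy--Schwarz in $n$ and a completely elementary Fourier-theoretic bound for bilinear forms (Proposition~\ref{willprop}). None of this --- amplification, reciprocity, Voronoi at a lowered modulus, the $\mcF-\mcO$ decomposition --- is present in your proposal. Instead you expand $\chi$ via a Gauss sum and apply Voronoi directly at modulus $q$; for $N\asymp q^{3/2}$ this just produces a dual sum of the same length and there is no gain at that point.

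Your proposed conductor-lowering device, a shift $n\mapsto n+mq$ that leaves $\chi(n)$ invariant followed by Cauchy--Schwarz in $m$, does not clearly lead to the correlation sums $\sum_{x\bmod q}\Kl_3(f_1(x);q)\overline{\Kl_3(f_2(x);q)}\chi(g(x))$ you assert, and you give no mechanism by which ``a further Poisson or Voronoi transformation'' would produce them: applying Cauchy--Schwarz before Voronoi leaves you with pair-correlations of $\GL_3$ Hecke eigenvalues which have no useful summation formula, while applying Voronoi first absorbs $\chi$ into the Gauss sum so that no residual $\chi(g(x))$ should appear afterwards. More importantly, even granting square-root cancellation for such correlation sums, your sketch contains no numerology connecting the choice $M\ll q^{1/2}$ to the saving $q^{-1/36}$, and I do not believe this architecture reproduces that exponent. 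Finally, your reliance on Katz' geometric monodromy analysis of $\Kl_3$-sheaves runs counter to one of the points the paper emphasizes (Remark~\ref{remcor17}): the estimate \eqref{eq:hn} and, indeed, the generalization Theorem~\ref{thm:main}, are obtained with essentially no $\ell$-adic input --- only classical one-variable Weil bounds, the elementary Proposition~\ref{willprop}, and the Rankin--Selberg average --- with the deeper sheaf-theoretic machinery reserved for Corollary~\ref{RScor}. You should revisit the $\mcF-\mcO$ decomposition and reciprocity step, which are where the real saving lives.
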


\begin{remark}
  We mention further variants, simplifications and improvements, by
  Aggarwal, Holowinsky, Lin and Sun~\cite{AHLS}, Holowinsky, Munshi
  and Qi~\cite{HMQ}, Lin~\cite{Lin}, Sun and Zhao ~\cite{SZ}.
\end{remark}

Let $(\lambda(m,n))$ denote the Hecke-eigenvalues of~$\varphi$.  By
the approximate functional equation for the twisted $L$-functions, the
bound \eqref{eq:hn} is essentially equivalent to the bound
\begin{equation}\label{eq:sumbound}
  \sum_{n\geq 1}\lambda(1,n)\chi(n)
  V\Bigl(\frac{n}{q^{3/2}}\Bigr)\ll q^{3/2-\delta},
\end{equation}
for~$\delta<1/36$, where $V$ is any smooth compactly supported
function and the implied constant depends on~$\varphi$, $\delta$
and~$V$.

From the perspective of such sums, motivated by the previous work of
Fouvry, Kowalski and Michel~\cite{FKM1}, which relates to automorphic
forms on~$\GL_2$, it is natural to ask whether this
bound~\eqref{eq:sumbound} holds when $\chi$ is replaced by a more
general trace function $K:\Fq\to \Cc$. Our main result shows that this
is the case, and in fact extends the result to a much wider range of
$q$-periodic functions by obtaining estimates only in terms of the
size of the discrete Fourier transform modulo~$q$.

Precisely, for any function~$V$ with compact support on~$\Rr$, we set
\begin{equation}\label{defSKX}
  S_{V}(K,X):=\sum_{n\geq 1}\lambda(1,n)K(n)V\Bigl(\frac{n}{X}\Bigr).
\end{equation}

We will assume that $V:\Rr\to \Cc$ satisfies the following conditions
for some parameter~$Z\geq 1$:
\begin{equation}\label{eq:Vprop}
  \mathrm{supp}(V)\subset ]1,2[,\text{ and }V^{(i)}(x)\ll Z^i\text{
    for all $i\geq 0$},
\end{equation}
where the implied constant depends only on~$i$.

For any integer~$q\geq 1$ and any $q$-periodic function
$K\colon \Zz\to \Cc$, we denote by
\begin{equation}\label{eq:fourierK}
  \what K(n)=\frac{1}{q^{1/2}}\sum_{x\in\Fq}K(x)e\Bigl(\frac{nx}{q}\Bigr),
\end{equation}
for~$n\in\Zz$, its (unitarily normalized) discrete Fourier transform
modulo~$q$.  We write~$\norm{\what{K}}_{\infty}$ for the maximum of
$|\what{K}(n)|$ for~$n\in\Zz$. We then have the discrete Fourier
inversion formula
$$
K(x)=\frac{1}{q^{1/2}}\sum_{n\in\Fq} \what{K}(n)
e\Bigl(-\frac{nx}{q}\Bigr)
$$
for~$x\in\Zz$.

Our main result is a general bound for \eqref{defSKX} which matches
precisely the bound of Holowinsky--Nelson \cite{HN} in the case of a
multiplicative character:

\begin{theorem}\label{thm:main}
  Let $\vphi$ be an $\SL_3(\Zz)$-invariant cuspidal Hecke-eigenform
  with Hecke eigenvalues $(\lambda(m,n))$.  Let~$q$ be a prime number,
  and $K\colon \Zz\to\Cc$ be a $q$-periodic function.
  Let $V$ be a smooth, compactly supported function satisfying
  \eqref{eq:Vprop} for some $Z\geq 1$.  Assume that
$$
    Z^{2/3}q^{4/3}\leq X \leq Z^{-2}q^{2}.
$$	
 For any~$\eps>0$, we have
 \begin{equation}\label{eq:sumboundK}
   S_V(K,X) \ll
   \norm{\what{K}}_{\infty}Z^{10/9}q^{2/9+\eps}X^{5/6},
 \end{equation}
 where the implied constant depends only on~$\eps$, on~$\vphi$, and on
 the implicit constants in~\emph{(\ref{eq:Vprop})}.
\end{theorem}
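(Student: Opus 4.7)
The plan is to follow the strategy of Munshi and Holowinsky--Nelson, now adapted to general $q$-periodic weights via Fourier inversion. First, I would use the discrete Fourier inversion formula to decompose
$$
S_V(K,X) = q^{-1/2}\sum_{h\in\Fq} \what{K}(h)\, T(h),\qquad T(h)=\sum_n \lambda(1,n)\, e(-hn/q)\, V(n/X).
$$
The trivial bound $|T(h)|\ll X$ summed against $|\what{K}(h)|\leq \norm{\what{K}}_\infty$ already gives $\norm{\what{K}}_\infty q^{1/2}X$, which is essentially the trivial bound on $S_V(K,X)$; the real gain must therefore come from simultaneous cancellation in $h$ and in the Hecke variable.

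Second, I would introduce an auxiliary parameter $L\geq 1$ (to be optimized) and apply a conductor-lowering smoothing step: either via the $q$-periodicity $K(n+q\ell)=K(n)$, or via a DFI-type $\delta$-symbol of modulus $qL$, one inserts an extra $L$-averaging that creates room for Cauchy--Schwarz. The Cauchy--Schwarz inequality is then applied in a variable outside the Hecke sum, separating $\lambda(1,n)$ from the $K$-weighted additive character. At this step, the norm $\norm{\what{K}}_\infty$ enters naturally via Parseval:
$$
\sum_{m\sim X}|K(m)|^2 \ll \frac{X}{q}\sum_h |\what{K}(h)|^2 \leq X\norm{\what{K}}_\infty^2.
$$

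Third, for the remaining second-moment factor on the Hecke side, I would apply the $\GL_3$-Voronoi summation formula to dualize the $n$-sum: this replaces $\lambda(1,n)$ by $\lambda(m,1)$ on a shorter dual range of length $\sim q^{3+\eps}/X$ (modified by the parameter $L$), weighted by Kloosterman sums $S(\cdot,\cdot;q)$ and Bessel-type transforms of $V$. Swapping the $h$- and $m$-sums, the resulting inner $h$-sum collapses into a correlation sum between $\what{K}$ and Kloosterman sums, which is a trace-function sum on $\Aa^1/\Fq$ and can be bounded via Deligne's bounds together with the trace-function formalism of Fouvry--Kowalski--Michel~\cite{FKM1}, giving $\norm{\what{K}}_\infty$ times square-root cancellation. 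Optimizing $L$ subject to the constraints $Z^{2/3}q^{4/3}\leq X\leq Z^{-2}q^2$ then yields the exponents $X^{5/6}q^{2/9}Z^{10/9}$ claimed in~\eqref{eq:sumboundK}.

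The main obstacle I anticipate is this final exponential-sum estimate: one must verify the geometric irreducibility (and appropriate non-triviality) of the relevant $\ell$-adic sheaf in the presence of an arbitrary trace-function twist by $\what{K}$, so that Deligne's square-root cancellation applies with exactly the $\norm{\what{K}}_\infty$-dependence claimed. This is precisely the point at which the argument generalizes beyond the Dirichlet character case of Holowinsky--Nelson, and requires a careful adaptation of the trace-function machinery to the $\GL_3$ context.
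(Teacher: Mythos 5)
Your proposal parts ways with the paper at a point that is actually fatal for the generality of the statement. After applying Cauchy--Schwarz and Voronoi, you propose to bound ``a correlation sum between $\what{K}$ and Kloosterman sums'' by invoking Deligne's bounds and the trace-function formalism of Fouvry--Kowalski--Michel. That works only when $K$ (equivalently $\what{K}$) is the trace function of a suitable geometrically irreducible sheaf. But Theorem~\ref{thm:main} is stated for an \emph{arbitrary} $q$-periodic $K$, with the bound depending only on $\norm{\what{K}}_\infty$; for such $K$ there is no sheaf, no irreducibility to check, and no Riemann Hypothesis to invoke, so the square-root cancellation step you need has no foundation. Concretely, if one takes $\what{K}(h)$ to be the complex conjugate of the Kloosterman weight you correlate against, then $\norm{\what{K}}_\infty\ll 1$ yet the correlation exhibits no cancellation at all. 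Your own last paragraph already flags this as the main obstacle ``in the presence of an arbitrary trace-function twist by $\what{K}$,'' but the point is that the hypothesis is strictly weaker than being a trace function, so the obstacle is not just one to be ``adapted'' to $\GL_3$ --- it cannot be cleared by any adaptation of that machinery.

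The paper's proof takes a structurally different route, explicitly designed to avoid needing algebraic input on $K$. Rather than Fourier-inverting $K$ and handling additive twists $T(h)$ (which Remark~\ref{remcor17}~(4) shows cannot give a non-trivial bound in terms of $\norm{\what{K}}_\infty$ alone, since Miller's exponent $3/4$ exceeds $2/3$), it embeds $K$ in the family $K(\cdot,h)$ of~\eqref{eq-kn}, averages over $h$ and over primes $p\in\rmP$, $l\in\rmL$ as in Section~\ref{ssec-amplification}, and splits $S_V(K,X)=\mcF-\mcO$. In $\mcF$, Poisson on the $h$-variable followed by reciprocity moves the Voronoi step to a \emph{small} modulus $rl\ll XP/q$, so the Kloosterman sums that appear have tiny conductor and are controlled by elementary Ramanujan-type estimates (Lemma~\ref{lm-delta}), with $\what{K}$ entering only through $|\what{K}(\cdot)|\le\norm{\what{K}}_\infty$. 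In $\mcO$, after Cauchy--Schwarz and Poisson in the long $n$-variable, only the zero frequency survives, and the resulting bilinear sum in $h_1p_1\ov{l}_1-h_2p_2\ov{l}_2$ is estimated by the entirely general Fourier-analytic Proposition~\ref{willprop}, which uses nothing beyond Plancherel and $\norm{\what{K}}_\infty$. The ``conductor-lowering smoothing'' you sketch (via $q$-periodicity or a DFI $\delta$-symbol of modulus $qL$) is also not what is done here and remains too imprecise to substitute for the amplification identity $K(n,0)=K(n)-\what{K}(0)/q^{1/2}$; the Petersson/$\delta$-symbol machinery of Munshi is precisely what Holowinsky--Nelson's streamlining, followed here, removes. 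If you want to salvage your approach, you would have to replace the Deligne step by an $L^2$/Plancherel argument of the same flavor as Proposition~\ref{willprop}, which is exactly what leads back to the paper's strategy.
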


\begin{remark}
  (1)  Suppose that we vary~$q$ and apply this bound with functions~$K$
  modulo~$q$ that have absolutely bounded Fourier transforms.  Take
  $X=q^{3/2}$. We then obtain the bound
  $$
  S_V(K,q^{3/2}) \ll Z^{10/9}q^{3/2-1/36+\eps}
  $$
  for any~$\eps>0$.
  \par
  (2) For the bound \eqref{eq:sumboundK} to be non-trivial (i.e.,
  assuming~$K$ to be absolutely bounded, better than $X$), it is
  enough that
  $$
  X\geq Z^{20/3}q^{4/3+\delta}
  $$
  for some~$\delta>0$.
  \par
  (3) As in the paper~\cite{short-sums} of Fouvry, Kowalski, Michel,
  Raju, Rivat and Soundararajan, where the main estimate is also
  phrased in Fourier-theoretic terms only,\footnote{Although the size
    of~$K$ enters in~\cite{short-sums} as well as that of its Fourier
    transform.} the motivating examples of functions~$K$ satisfying
  uniform bounds on their Fourier transforms are the trace functions
  of suitable $\ell$-adic sheaves modulo~$q$. The simplest example
  is~$K(n)=\chi(n)$, which recovers the bound of Munshi (up to the
  value of the exponent) and Holowinsky--Nelson, since the values of
  the Fourier transform are normalized Gauss sums of modulo~$\leq
  1$. We recall some other examples below in
  Section~\ref{sec-examples}.
\end{remark}

We can  deduce from Theorem~\ref{thm:main} a weak but non-trivial
bound for the first moment of the twisted central $L$-values, with an
additional twist by a discrete Mellin transform. We first recall the
definition
$$
\Kl_3(n)=\frac{1}{q}\sum_{\substack{x,y,z\in\Fqt\\xyz=n}}
e\Bigl(\frac{x+y+z}{q}\Bigr)
$$
for a hyper-Kloosterman sum with two variables modulo a prime~$q$.

\begin{corollary}\label{cor-average}
  Let $\vphi$ be an $\SL_3(\Zz)$-invariant cuspidal Hecke-eigenform
  with Hecke eigenvalues $(\lambda(m,n))$.  Let~$q$ be a prime number
  and let~$\chi\mapsto M(\chi)$ be a function of Dirichlet characters
  modulo~$q$.
  \par
  Let~$K$ and~$L$ be the $q$-periodic functions defined
  by~$K(0)=L(0)=0$ and
  \begin{align*}
    K(n)&=\frac{q^{1/2}}{q-1}\sum_{\chi\mods{q}}\chi(n)M(\chi)\\
    L(n)&=\frac{1}{q^{1/2}} \sum_{x\in\Fq}K(x)\Kl_3(nx)
  \end{align*}
  for~$n$ coprime to~$q$.  We then have
  $$
  \frac{1}{q-1} \sum_{\chi\mods{q}}M(\chi)L(\vphi\otimes\chi, 1/2) \ll
  \Bigl(\norm{\what{K}}_{\infty}+\norm{\what{L}}_{\infty}\Bigr) q^{2/9
    + \eps},
  $$
  for any~$\eps>0$, where the  implied constant depends
  on~$\vphi$ and,~$\eps$.
\end{corollary}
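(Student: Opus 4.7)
The plan is to apply the approximate functional equation to each central value $L(\vphi\otimes\chi,1/2)$, swap the order of summation over $\chi$ and $n$, and recognize the two resulting inner $\chi$-averages as the functions $K$ and $L$, so that Theorem~\ref{thm:main} applies to each piece separately. For $\chi$ primitive modulo the prime $q$ (the principal character produces an $O(q^\eps)$ contribution that is negligible), the approximate functional equation at $s=1/2$ takes the form
\begin{equation*}
L(\vphi\otimes\chi,1/2) = \sum_{n\geq 1}\frac{\lambda(1,n)\chi(n)}{\sqrt n}V_1\Bigl(\frac n{q^{3/2}}\Bigr) + \eps(\vphi\otimes\chi)\sum_{n\geq 1}\frac{\lambda(n,1)\bar\chi(n)}{\sqrt n}V_2\Bigl(\frac n{q^{3/2}}\Bigr),
\end{equation*}
where the root number has the form $\eps(\vphi\otimes\chi) = c(\vphi)\eta(\chi)\tau(\chi)^3 q^{-3/2}$ with $\eta$ a fixed sign-character of $\chi$ that may be absorbed into $M$.

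Averaging the direct sum against $M(\chi)/(q-1)$ and swapping orders, the inner $\chi$-sum is, by the definition of $K$, exactly $q^{-1/2}K(n)$. Absorbing the smooth factor $n^{-1/2}\asymp q^{-3/4}$ into a modified cutoff $\widetilde V_1$ supported in $]1,2[$, this contribution becomes $q^{-5/4}S_{\widetilde V_1}(K,q^{3/2})$. The hypothesis $Z^{2/3}q^{4/3}\leq q^{3/2}\leq Z^{-2}q^2$ of Theorem~\ref{thm:main} holds with $Z=O(1)$, and the resulting bound $\norm{\what K}_{\infty}q^{2/9+\eps}(q^{3/2})^{5/6}$ becomes, after multiplication by $q^{-5/4}$, the desired $\norm{\what K}_{\infty}q^{2/9+\eps}$. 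For the dual sum, expand $\tau(\chi)^3=\sum_{a,b,c\in\Fqt}\chi(abc)e((a+b+c)/q)$, swap sums, and reduce the $\chi$-average to $\frac{1}{q-1}\sum_\chi M(\chi)\chi(abc\bar n) = q^{-1/2}K(abc\bar n)$. Setting $t=abc$ collects the triple exponential into $q\Kl_3(t)$, and the substitution $u=t\bar n$ combined with the definition of $L$ yields the key identity
\begin{equation*}
\frac{1}{q-1}\sum_\chi M(\chi)\tau(\chi)^3\bar\chi(n) = qL(n).
\end{equation*}
Inserted into the dual piece with the factor $c(\vphi)q^{-3/2}$ from the root number, this contributes $c(\vphi)q^{-1/2}\sum_n \lambda(n,1) L(n) n^{-1/2}V_2(n/q^{3/2})$. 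Since $\lambda(n,1)$ is the $(1,n)$ Hecke eigenvalue of the dual cusp form $\widetilde\vphi$, Theorem~\ref{thm:main} applied to $\widetilde\vphi$ gives the bound $\norm{\what L}_{\infty}q^{2/9+\eps}$ by the same calculation as above, and summing the two pieces yields the claimed estimate.

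The main obstacle is the derivation of the key identity displayed above, which is precisely the content of the definition of $L$: the multiplicative Plancherel duality combined with the cubic Gauss sum $\tau(\chi)^3$ produces exactly the discrete $\Kl_3$-transform of $K$. Once this identity is in hand, the two pieces are symmetric applications of Theorem~\ref{thm:main} to $\vphi$ and to $\widetilde\vphi$, and the parameter choice $X=q^{3/2}$ is precisely the length dictated by the analytic conductor $q^3$ of $\vphi\otimes\chi$, ensuring that all required hypotheses hold with $Z=O(1)$.
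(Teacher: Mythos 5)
Your proposal is correct and follows the same route as the paper: apply the approximate functional equation in balanced form, swap the order of summation so that the inner character averages become $q^{-1/2}K(n)$ and (via the root number) $q^{-1/2}L(n)$, and then apply Theorem~\ref{thm:main} at $X=q^{3/2}$ to $\vphi$ for the $K$-piece and to its dual $\widetilde\vphi$ for the $L$-piece. The computation you carry out expanding $\tau(\chi)^3$ into the triple exponential sum $\sum_{abc=t}e_q(a+b+c)=q\Kl_3(t)$ and substituting $u=t\bar n$ is exactly the ``elementary computation'' the paper invokes but does not spell out, identifying $\frac{q^{1/2}}{q-1}\sum_\chi\tau(\chi)^3M(\chi)\bar\chi(n)$ with the $\Kl_3$-transform of $K$ defining $L$.
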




A further natural application concerns the symmetric square lift, $\sym_2(\psi)$, of a
$\GL_2$-cusp form of level~$1$. Precisely, let $\psi$ be a cuspidal
Hecke-eigenform for~$\SL_2(\Zz)$ with Hecke eigenvalues
$(\lambda(n))_{n\geq 1}$. This implies the following
\begin{corollary}\label{cor-gl2}
  Let $K$ and $V$ be as above and assume that
  $Z^{2/3}q^{4/3}\leq X \leq Z^{-2}q^{2}$. Then, for any~$\eps>0$, we
  have
  $$
  \sum_{n\geq 1}\lambda(n^2)K(n)V\Bigl(\frac{n}{X}\Bigr) \ll
  \norm{\what{K}}_{\infty}Z^{10/9}q^{2/9+\eps}X^{5/6}+
  \norm{K}_{\infty}Z^{1/3}q^{2/3}X^{1/2+\eps},
  $$
  where the implied constant depends only on~$\eps$, on~$\psi$, and on
  the implicit constants in~\emph{(\ref{eq:Vprop})}.
\end{corollary}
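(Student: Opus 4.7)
My plan is to reduce the sum $\sum_n \lambda(n^2) K(n) V(n/X)$ to an application of Theorem~\ref{thm:main} with $\vphi=\sym_2(\psi)$, which is a cuspidal Hecke eigenform on $\SL_3(\Zz)$ by Gelbart--Jacquet (since $\psi$ is of level $1$, hence non-dihedral). A routine Euler-product computation, using $\alpha_p\beta_p=1$, gives the identity
\begin{equation*}
\sum_{n\geq 1}\frac{\lambda(n^{2})}{n^{s}}=\frac{L(\sym_{2}(\psi),s)}{\zeta(2s)},
\end{equation*}
and inverting yields the key pointwise relation
\begin{equation*}
\lambda(n^{2})=\sum_{d^{2}\mid n}\mu(d)\,\lambda_{\vphi}(1,n/d^{2}),\qquad \vphi=\sym_{2}(\psi).
\end{equation*}

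Substituting and setting $n=d^{2}m$, the sum to estimate becomes
\begin{equation*}
\sum_{d\geq 1}\mu(d)\sum_{m\geq 1}\lambda_{\vphi}(1,m)\,L_{d}(m)\,V\Bigl(\frac{m}{X/d^{2}}\Bigr),\qquad L_{d}(m):=K(d^{2}m).
\end{equation*}
For $(d,q)=1$, the function $L_{d}$ is $q$-periodic and a direct change of variable on $\Fq$ shows $\what{L_{d}}(n)=\what{K}(n\bar{d}^{\,2})$, so $\norm{\what{L_{d}}}_{\infty}=\norm{\what{K}}_{\infty}$. The dilated window $V_{d}(y):=V(y)$ still satisfies~\refs{eq:Vprop} with the same constants.

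I then split the sum at the threshold $D:=X^{1/2}/(Z^{1/3}q^{2/3})$. For $d\leq D$ coprime to $q$, the hypotheses of Theorem~\ref{thm:main} hold with $X_{d}=X/d^{2}$ (the upper bound $X_{d}\leq Z^{-2}q^{2}$ is automatic from that for $X$, and $d\leq D$ is exactly the lower bound $X_{d}\geq Z^{2/3}q^{4/3}$), yielding
\begin{equation*}
\Bigl|\sum_{m}\lambda_{\vphi}(1,m)L_{d}(m)V_{d}\bigl(m/X_{d}\bigr)\Bigr| \ll \norm{\what K}_{\infty}\,Z^{10/9}q^{2/9+\eps}\,(X/d^{2})^{5/6}.
\end{equation*}
Summing over $d\geq 1$ gives a convergent tail $\sum_{d}d^{-5/3}$ and produces the first term $\norm{\what K}_{\infty}Z^{10/9}q^{2/9+\eps}X^{5/6}$. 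For $d>D$, I apply Cauchy--Schwarz together with the Rankin--Selberg bound $\sum_{m\leq N}|\lambda_{\vphi}(1,m)|^{2}\ll N^{1+\eps}$ and the trivial bound $|L_{d}|\leq\norm{K}_{\infty}$, obtaining for each such $d$ the estimate $\ll\norm{K}_{\infty}(X/d^{2})^{1+\eps}$; summing $\sum_{d>D}d^{-2}\ll D^{-1}$ produces exactly the second term $\norm{K}_{\infty}Z^{1/3}q^{2/3}X^{1/2+\eps}$. The multiples $d=qd'$ are automatically in the range $d>D$ because $X\leq Z^{-2}q^{2}$ forces $D\leq q^{1/3}/Z^{4/3}<q$, so no separate argument is needed for them.

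The only step that requires genuine input beyond bookkeeping is the invocation of Theorem~\ref{thm:main}; I expect the main subtlety to be verifying carefully that its hypotheses hold uniformly across the dyadic range of $d$ and that $L_{d}$ is a legitimate $q$-periodic input with the correct Fourier-transform norm. Everything else is a clean partition of the $d$-sum matched to the exact threshold at which Theorem~\ref{thm:main} ceases to improve on the Rankin--Selberg trivial bound.
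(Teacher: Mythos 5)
Your proof is correct and follows essentially the same approach as the paper: the identity $\lambda(n^2)=\sum_{d^2\mid n}\mu(d)\lambda(1,n/d^2)$ via the symmetric square, the observation that $\widehat{L_d}(x)=\widehat{K}(\bar d^2 x)$ preserves the Fourier norm, the split at the threshold $D=X^{1/2}/(Z^{1/3}q^{2/3})$ with Theorem~\ref{thm:main} applied below and the Rankin--Selberg trivial bound above, and the resulting two-term estimate. Your explicit remark that $D\leq q^{1/3}/Z^{4/3}<q$ (so $q\nmid d$ automatically for $d\leq D$) is a detail the paper leaves implicit, but otherwise the two arguments coincide step for step.
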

\begin{remark}\label{blomerremark} As pointed out to us by V. Blomer, when $K=\chi$ is a Dirichlet character a stronger bound should be available: for $\chi$ quadratic,  one has (see \cite{Blomer}) the stronger  subconvex bound for the central value
\begin{equation}\label{blomerbound}
	L(\sym_2(\psi)\otimes\chi,s)\ll_s q^{3/4-1/8+o(1)},\ \Re s=1/2.
\end{equation}
This would amount to a bound of the shape
$$
  \sum_{n\geq 1}\lambda(n^2)\chi(n)V\Bigl(\frac{n}{q^{3/2}}\Bigr) \ll_Z
  q^{3/2-1/8+\eps}.
  $$
The bound \eqref{blomerbound} actually extends to any character $\chi\mods q$ by the same method, using the Petrow-Young variant of the Conrey-Iwaniec method \cite{CI,PY}. However, since this approach uses positivity of central values, it is not entirely clear yet whether this could be extended to general trace functions. 
\end{remark}

From this corollary, one can easily derive an estimate for twists of
the arithmetic function~$\lambda(n)^2=|\lambda(n)|^2$, which is related
to~$\lambda(n^2)$ by the convolution identity
\begin{equation}\label{convol}
  \lambda(n)^2=\sum_{ab=n}\lambda(a^2).	
\end{equation}

However, in terms of $L$-functions, a straightforward estimate
concerns sums of length close to~$q^2$, and not~$q^{3/2}$ anymore (it
amounts, when~$K=\chi$, to a subconvexity estimate
for~$L(f\otimes f\otimes \chi,\demi)$, which results directly from the
factorization of this $L$-function of degree~$4$).

One can however recover a bound for sums of length about~$q^{3/2}$
with more work, and here we require that $K$ be a trace function (more
precisely, a \emph{non-exceptional} trace function, in the sense
of~\cite[p. 1686]{FKM2}).


\begin{corollary}\label{cor2-gl2}\label{RScor}
  Let $V$ be as above.  Let $K$ be a the trace function of an
  $\ell$-adic sheaf $\mcF$ modulo~$q$ which is a geometrically
  irreducible middle-extension sheaf, pure of weight~$0$, on the
  affine line over~$\Ff_q$. Assume that the sheaf $\mcF$ is not
  geometrically isomorphic to the tensor product
  $\mcL_\psi\otimes\mcL_\chi$ of an Artin-Schreier sheaf and a Kummer
  sheaf.
  \par
  If $Z^{-4/3}q^{4/3+8\gamma/3} \leq X \leq Z^{-2}q^{2}$, then  we have
  $$
  \sum_{n\geq 1}\lambda(n)^2K(n)V\Bigl(\frac{n}{X}\Bigr) \ll
  X^{2/3+\eps}q^{1/3}+Z^{5/6}X^{7/8+\eps}q^{1/6}+X^{1+\eps}q^{-\gamma}
  $$
  for any~$\eps>0$, where the implied constant depends only
  on~$\psi~$, $\eps$ and on the conductor~$\cond(\mcF)$ of~$\mcF$.
\end{corollary}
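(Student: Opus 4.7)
The plan is to reduce the sum involving $\lambda(n)^2$ to sums involving $\lambda(n^2)$ (to which Corollary~\ref{cor-gl2} applies) via the Hecke convolution identity $\lambda(n)^2 = \sum_{ab=n}\lambda(a^2)$, which stems from the factorization $L(\psi\otimes\psi,s)=L(\sym_2(\psi),s)\zeta(s)$ recalled in \eqref{convol}. Substituting yields a hyperbolic sum
$$\sum_{n\geq 1}\lambda(n)^2 K(n)V(n/X) = \sum_{a,b\geq 1}\lambda(a^2) K(ab)V(ab/X),$$
which I would split at a parameter $A$ (to be chosen) into the contribution $S_1$ from $a\leq A$ and the contribution $S_2$ from $a>A$.

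For $S_2$ I would swap the order of summation so that $b\leq X/A$ is the outer variable. For each such $b$ with $(b,q)=1$, the function $a\mapsto K(ab)$ is the trace function of the pullback $[\times b]^*\mcF$ and satisfies $\|\widehat{K_b}\|_\infty = \|\widehat K\|_\infty$ and $\|K_b\|_\infty = \|K\|_\infty$; the weight $a\mapsto V(ab/X)$ is a rescaling of $V$ of effective length $X/b$ satisfying the same smoothness condition \eqref{eq:Vprop} with the same $Z$. Under the hypothesis $X\leq Z^{-2}q^2$, and provided $A\geq Z^{2/3}q^{4/3}$, Corollary~\ref{cor-gl2} applies uniformly to each inner sum; summing its bounds over $b\leq X/A$ (via $\sum_{b\leq B} b^{-5/6}\ll B^{1/6}$ and $\sum_{b\leq B} b^{-1/2}\ll B^{1/2}$) produces
$$|S_2|\ll Z^{10/9}q^{2/9+\eps}X\,A^{-1/6} + Z^{1/3}q^{2/3+\eps}X\,A^{-1/2}.$$
Choosing $A\asymp Z^{2/3}X^{2/3}q^{2/3}$ equates the second term with the first target $X^{2/3+\eps}q^{1/3}$, and the upper bound $X\leq Z^{-2}q^2$ then forces the first summand to be $\ll Z^{5/6}X^{7/8+\eps}q^{1/6}$, matching the second target term.

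For $S_1$ I would exploit the assumption that $\mcF$ is not geometrically isomorphic to any $\mcL_\psi\otimes\mcL_\chi$: the pullback $[\times a]^*\mcF$ is then also geometrically irreducible and non-exceptional for $(a,q)=1$, so the short-sum estimates for non-exceptional trace functions developed in~\cite{short-sums} (together with Polya--Vinogradov for long inner sums) yield a bound of the form $\sum_b K(ab)V(ab/X)\ll (X/a)\,q^{-\gamma+\eps}$ for an exponent $\gamma>0$ controlled by $\cond(\mcF)$. Combined with the divisor bound $\sum_{a\leq A}|\lambda(a^2)|/a\ll 1$, this gives $|S_1|\ll X^{1+\eps}q^{-\gamma}$, the third term of the claim. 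The hypothesis $X\geq Z^{-4/3}q^{4/3+8\gamma/3}$ ensures both that the optimal $A\asymp Z^{2/3}X^{2/3}q^{2/3}$ remains in the admissible range $[Z^{2/3}q^{4/3},X]$ for Corollary~\ref{cor-gl2}, and that the short-sum regime for $S_1$ is wide enough to deliver the exponent $\gamma$. The main technical obstacle will be step $S_1$: producing the uniform $q^{-\gamma}$ saving for short sums of the pulled-back trace functions requires tracking the geometric irreducibility and the Swan conductor of $[\times a]^*\mcF$ sheaf-theoretically, and using the non-exceptional hypothesis precisely to rule out the sheaves $\mcL_\psi\otimes\mcL_\chi$ that would otherwise obstruct a power-saving bound.
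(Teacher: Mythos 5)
Your treatment of $S_2$ is sound: for fixed $b\leq X/A$, the inner sum is a $\lambda(a^2)$-sum of length $X/b\geq A$ against the dilated trace function $K_b(a)=K(ab)$, whose Fourier transform has the same sup-norm, and Corollary~\ref{cor-gl2} applies as long as $A\geq Z^{2/3}q^{4/3}$; summing its two bounds over $b$ with $\sum_{b\leq B}b^{-5/6}\ll B^{1/6}$ and $\sum_{b\leq B}b^{-1/2}\ll B^{1/2}$ yields exactly the quantity you display. The gap is in $S_1$. You need $A$ well above $Z^{2/3}q^{4/3}$ for the $S_2$ bounds to beat the trivial estimate (at $A=Z^{2/3}q^{4/3}$ they are $\asymp ZX$), and the value you propose, $A\asymp Z^{2/3}q^{2/3}X^{2/3}$, in fact exceeds $X$ whenever $X\leq Z^{-2}q^2$; even the smaller admissible choices force $A$ to be well above $X/q^{1/2}$ throughout the range $Z^{-4/3}q^{4/3+8\gamma/3}\leq X\leq Z^{-2}q^2$. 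Consequently $S_1$ contains a band $a\in(X/q^{1/2},A]$ in which the inner $b$-sum has length $X/a<q^{1/2}$. In that regime neither P\'olya--Vinogradov nor the short-sum estimates of~\cite{short-sums} give any power saving for a general trace function (those results require length at least of order $q^{1/2}$), so the claimed uniform bound $\sum_b K(ab)V(ab/X)\ll (X/a)q^{-\gamma+\eps}$ is false there. Since this band of $a$ carries total mass $\sum|\lambda(a^2)|/a\gg (\log X)^{-O(1)}$ and each inner sum can be of size comparable to its trivial bound, the trivial contribution of the band to $S_1$ is $\gg X^{1-o(1)}$, so the argument loses the saving entirely.

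What the paper does instead is keep the third, $d$-variable visible, writing $\lambda(n)^2=\sum_{d^2bc=n}\mu(d)\lambda(1,c)$, and after discarding $d>D$ trivially it decomposes the $m$-sum dyadically into $m\sim M$. Small $M$ is handled by Theorem~\ref{thm:main} on the long $n$-sum; large $M$ (above $q^{1/2+\eta}$) by Poisson on $m$; and the intermediate regime $q^{2\delta}\leq M<q^{1/2+\eta}$ — which is exactly where your $S_1$ fails — is handled by Cauchy--Schwarz in the long $n$-variable, then Poisson in $n$, producing a bilinear sum in $(m_1,m_2)$ against the correlation sums $\what K_{(2)}$. This is where the non-exceptionality of $\mcF$ actually enters, via~\cite[Thm.~6.3]{FKM2}, which guarantees $\|\what K_{(2)}\|_\infty\ll 1$ off a bounded set of ratios $m_1/m_2$. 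Your proposal invokes non-exceptionality only via~\cite{short-sums}, which is the wrong tool for this intermediate range; to repair $S_1$ you would have to reintroduce the $m$-variable and run a Cauchy--Schwarz/Poisson/correlation argument in its place, which is precisely the ternary decomposition the paper uses.
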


\begin{remark}
  (1) Suppose that~$Z$ is fixed. The estimate is then non-trivial as
  long as $X\gg q^{4/3+\delta}$; for $X=q^{3/2}$, it saves a factor
  $q^{1/48}$ over the trivial bound.
  \par
  (2) The assumption that~$\mcF$ is not exceptional means intuitively
  that $K$ is not proportional to the product of an additive and a
  multiplicative character modulo~$q$. We then have in particular
  $$
  \|K\|_{\infty}+\|\widehat{K}\|_{\infty}\ll 1
  $$
  where the implied constant depends only on the conductor of~$\mcF$.
\end{remark}

\begin{remark}\label{remcor17}
  (1) The reader may wonder why this paper is much shorter
  than~\cite{FKM1}, and (with the exception of Corollary \ref{RScor})
  requires much less input from algebraic geometry in the case of
  trace functions. One reason is that we are considering (essentially)
  sums of length~$q^{3/2}$ whereas the coefficients functions~$K$ are
  $q$-periodic. This means that periodicity properties of the
  summand~$K(n)$ have a non-trivial effect, whereas they do not for
  the sums of length about~$q$ which are considered in~\cite{FKM1} in
  the context of~$\GL_2$.
  \par
  Moreover, observe that an analogue of Theorem~\ref{thm:main}, with
  an estimate that depends (in terms of~$K$) only on the size of the
  Fourier transform~$\what{K}$, is \emph{false} in the setting
  of~\cite{FKM1}, i.e., for sums
  $$
  \sum_{n\geq 1}\lambda(n)K(n)V\Bigl(\frac{n}{X}\Bigr)
  $$
  with~$X$ of size about~$q$, where $\lambda(n)$ are the
  Hecke-eigenvalues of a cusp forms~$\psi$ for $\SL_2(\Zz)$ (as in
  Corollary~\ref{cor-gl2}). Indeed, if we take $X=q$ and define~$K$ to
  be the $q$-periodic function that coincides with the (real-valued)
  function $n\mapsto \lambda(n)$ for $1\leq n\leq q$, then~$K$ has
  discrete Fourier transform of size~$\ll \log q$ by the well-known
  Wilton estimate (see, e.g.,~\cite[Th. 5.3]{iwaniec}, when~$\psi$ is
  holomorphic), and yet
  $$
  \sum_{n\leq q}K(n)\lambda(n)=\sum_{n\leq q}|\lambda(n)|^2\asymp q
  $$
  by the Rankin--Selberg method.
  \par
  On the other hand, the same bound of Wilton combined with discrete
  Fourier inversion implies quickly that if~$K$ is any $q$-periodic
  function, then
  $$
  \sum_{n\leq q^{3/2}}\lambda(n) K(n)\ll
  q^{1+1/4+\eps}\norm{\what{K}}_{\infty}
  $$
  for any~$\eps>0$. However, the natural length for applications
  is~$q$ in the $\GL_2$ case.
  \par
  (2) The most obvious function $K$ for which Theorem~\ref{thm:main}
  gives trivial results is an additive character $K(n)=e(an/q)$ for
  some integer~$a\in\Zz$, since the Fourier transform takes one value
  of size~$q^{1/2}$. However, a useful estimate also exists in this
  case: Miller~\cite{Miller} has proved that
  $$
  \sum_{n\geq 1}\lambda(1,n)e(\alpha
  n)V\Bigl(\frac{n}{X}\Bigr)\ll_{\vphi,Z} X^{3/4+\eps}
  $$
  for~$X\geq 2$, any~$\alpha\in\Rr$ and any~$\eps>0$, where the
  implied constant is independent of~$\alpha$. This is the
  generalization to~$\GL_3$ of the bound of Wilton mentioned in the
  first remark.
  \par
  (3) Using either the functional equation for the $L$-functions
  $L(\vphi\otimes\chi,s)$, or the Voronoi summation formula, one can
  show that the estimate of Miller implies a bound of the shape
  $$
  S_V(\Kl_2(a\cdot;q),X)\ll_{\vphi,Z} (qX)^{\eps}X^{1/4}q^{3/4}
  $$
  for any~$\eps>0$, where
  $$
  \Kl_2(n;q)=\frac{1}{q^{1/2}}\sum_{x\in\Fqt}e_q(\ov x+nx)
  $$ 
  is a normalized Kloosterman sum. This bound is non-trivial as long
  as $X\geq q$. Since~$\Kl_2$ is a trace function that is bounded
  by~$2$ and has Fourier transform bounded by~$1$, this gives (in a
  special case) a stronger bound than what follows from
  Theorem~\ref{thm:main}.
  \par
  (4) Remark (2) suggests a direct approach by the discrete Fourier
  inversion formula, which gives
  $$
  \sum_{n\leq X}\lambda(1,n)K(n)=\frac{1}{\sqrt{q}} \sum_{0\leq h<q}
  \widehat{K}(h) \sum_{n\leq X}\lambda(1,n)e\Bigl(\frac{nh}{q}\Bigr).
  $$
  A non-trivial bound for~$X\approx q^{3/2}$ in terms
  of~$\norm{\what{K}}_{\infty}$ would then follow from a bound
  $$
  \sum_{n\leq X}\lambda(1,n)e\Bigl(\frac{nh}{q}\Bigr) \ll X^{\alpha}
  $$
  for additive twists of the Fourier coefficients
  where~$\alpha<2/3$.
  \par
  Unsurprisingly, in the case of~$\GL_2$, although we have the best
  possible estimate of Wilton (with the analogue of~$\alpha$
  being~$1/2$), the resulting estimate for a sum of length~$q$ is
  trivial.
\end{remark}

The plan of the paper is as follows: we will explain the idea and
sketch the key steps of the proof in
Section~\ref{sec-principle}. Section~\ref{sec-examples} recalls the
most important examples of trace functions, for which~$K$ has small
Fourier transform and hence for which Theorem~\ref{thm:main} is
non-trivial. Section~\ref{sec-reminders} presents a key
Fourier-theoretic estimate and some reminders concerning automorphic
forms and the Voronoi summation formula for~$\GL_3$. Then the last
sections complete the proof of Theorem~\ref{thm:main} following the
outline presented previously, and explain how to deduce
Corollaries~\ref{cor-average}, \ref{cor-gl2} and~\ref{RScor} (the last
of which requires further ingredients).

\subsection*{Acknowledgements}

We are very grateful to R. Holowinsky and P. Nelson for sharing with
us and explaining their work \cite{HN} which has directly inspired the
present paper. We are also very grateful to V. Blomer for Remark \ref{blomerremark} and to the referees for their
careful reading of the manuscript, comments and suggestions and in particular for pointing out
a serious error in the first version of Corollary~\ref{RScor}.

\subsection*{Notation}

For any~$z\in\Cc$, we define $e(z)=\exp(2\pi i z)$.  If~$q\geq 1$,
then we denote by $e_q(x)$ the additive character modulo~$q$ defined
by $e_q(x)=e(x/q)$ for~$x\in\Zz$. We often identify a $q$-periodic
function defined on~$\Zz$ with a function on~$\Zz/q\Zz$.

For any finite abelian group~$A$, we use the notation~$\what{f}$ for
the unitary Fourier transform defined on the character
group~$\what{A}$ of~$A$ by
$$
\what{f}(\psi)=\frac{1}{\sqrt{|A|}}
\sum_{x\in A}f(x)\psi(x).
$$
We have then the Plancherel formula~$\norm{f}_2=\norm{\what{f}}_2$,
where
$$
\|f\|_2=\sum_{x\in A}|f(x)|^2,\quad\quad \|\what{f}\|_2=\sum_{\psi\in
  \what{A}}|\what{f}(\psi)|^2.
$$


For any integrable function on~$\Rr$, we denote its Fourier transform
by
$$
\what{V}(y)=\int_{\mathbf{R}}V(x)e(-xy)dx.
$$
We recall the Poisson summation formula when performed with a
$q$-periodic function~$K$ in addition to a smooth function~$V$ with
fast decay at infinity: for any~$X\geq 1$, we have
\begin{equation}\label{eq-poisson}
  \sum_{n\in\Zz}K(n)V\Bigl(\frac{n}{X}\Bigr)= \frac{X}{q^{1/2}} \sum_{h\in\Zz}
  \what{K}(h)\what{V}\Bigl(\frac{hX}{q}\Bigr).
\end{equation}
This follows directly from the usual Poisson formula and the
definition of $\what{K}$ after splitting the sum into congruence
classes modulo~$q$.

\section{Principle of the proof}\label{sec-principle}

We use a direct generalisation of the method of Holowinsky and
Nelson~\cite{HN} that led to Theorem~\ref{th-hn}. Although it was
motivated by Munshi's approach, based on the use of the Petersson
formula as a tool to express the delta symbol, there is no remaining
trace of this point of view; however, we refer to~\cite[App. B]{HN}
for a detailed and insightful description of the origin of this
streamlined method, starting from Munshi's.

\subsection{Amplification}\label{ssec-amplification}

The first step is to realize the $q$-periodic function~$K$ within a
one-parameter family of $q$-periodic functions. Precisely, let
$\what K$ be the Fourier transform of $K$ (see \eqref{eq:fourierK})
and define
\begin{equation}\label{whatKzdef}
\what K(z,h):= \begin{cases} \what K(z)e_q(-h\ov z)& q\nmid z\\
  \what K(0)& q\mid z
\end{cases}
\end{equation}
for $(z,h)\in \Zz^2$. Then put
\begin{equation}\label{eq-kn}
K(n,h)=\frac{1}{q^{1/2}}\sum_{z\in\Fqt}\what K(z,h)e_q(-nz).
\end{equation}
for $(n,h)\in\Zz^2$.  By the discrete Fourier inversion formula, we
have
\begin{equation}\label{eq-detector}
K(n,0)=K(n)-\frac{\what K(0)}{q^{1/2}}
\end{equation}
More generally, for any probability measure $\varpi$ on~$\Fqt$, the
average
$$
K_{\varpi}(n,h)=\sum_{l\in\Fqt}\varpi(l)K(n,\ov lh)
$$ 
satisfies $K_{\varpi}(n,0)=K(n)-\frac{\what K(0)}{q^{1/2}}$.  It follows
that, for any parameter $H\geq 1$, we can express the sum $S_V(K,X)$
as the difference of double sums
$$
S_V(K,X)=\sum_{l\in\mcL}\varpi(l)\sum_{|h|\leq H}S_{V}(K(\cdot,h\ov
l),X)- \sum_{l\in\mcL}\varpi(l)\sum_{0<|h|\leq H}S_{V}(K(\cdot,h\ov
l),X),
$$
up to an error $\ll X/q^{1/2}$.  We write this difference as
$$
S_V(K,X)=\mcF-\mcO,
$$
say. One then needs to select a suitable probability measure~$\varpi$,
and then the two terms are then handled by different methods. It
should be emphasized that no main term arises (which would have to be
canceled in the difference between the two terms).

\begin{remark}
  The argument is reminiscent of the \emph{amplification method}, the
  function $K(n)=K(n,0)$ being ``amplified'' (up to a small error)
  within the family $(K(n,h))_{|h|\leq H}$.
\end{remark}

\subsection{Bounding $\mcF$}

As in~\cite{HN}, we consider a probability measure~$\varpi$
corresponding to a product structure: we average over pairs $(p,l)$ of
primes such that $p\sim P$ and $l\sim L$, and take $\varpi(x)$
proportional to the number of representations $x=\ov p l\mods q$,
where $p\sim P$ and $l\sim L$ are primes (their sizes being parameters
$1\leq P,L<q/2$ to be chosen later).

The treatment of $\mcF$ is essentially the same as
in~\cite{HN, Lin}. By applying the Poisson summation formula to the
$h$-variable, with dual variable~$r$, we see the function
$$
(n,r,p,l)\mapsto \what K(-p\ov{l r})\lambda(1,n)e_q(np\ov{l r}),
$$
appear. We then appeal to the classical ``reciprocity law'' for
additive exponentials, namely
$$
e_q(np\ov {l r})\approx e_{rl}(-np\ov q),
$$
trading the modulus $q$ for the modulus $rl$, which will be
significantly smaller than $q$. We then apply the Voronoi summation
formula for the cusp form $\varphi$ on the $n$-variable (this is the
only real automorphic input), which transforms the additive phase
$e_{rl}(-np\ov q)$ into Kloosterman sums of modulus $rl$. We then
obtain further cancellation by smoothing out the resulting variable
(dual to $n$) by using Cauchy--Schwarz and detecting cancellations on
averages of products of Kloosterman sums, where the product structure
of the averaging set is essential.

In this part of the argument, the coefficient function~$K$ plays very
little role, and we could just more or less quote the
corresponding statements in~\cite{HN, Lin}, if the parameter~$Z$ was
fixed. Since we wish to keep track of its behavior (for the purpose of
flexibility for potential applications), we have to go through the
computations anew. This is done in detail in Section~\ref{sec:Fsum}.

\subsection{Bounding $\mcO$}

In the sum~$\mcO$, with the averaging performed in the same way as
for~$\mcF$, the key point is that the $n$-variable in the sum
$S_V(K(\cdot, h\bar{l},X)$ is very long compared to $q$. We apply
Cauchy's inequality to smooth it, keeping the other variables $h,p,l$
in the inside, thus eliminating the automorphic coefficients
$\lambda(1,n)$ (for which we only require average bounds, which we
borrow from the Rankin--Selberg theory, our second important
automorphic input). This leads quickly to the problem of estimating
the sum
$$
\sumsum_{p_1,h_1,l_1,p_2,h_2,l_2}\sum_{n\sim X}K(n,h_1p_1\ov
l_1)\ov{K(n,h_2p_2\ov l_2)}.
$$
We apply the Poisson formula in the $n$-variable; since $X$ is
typically much larger than $q$, only the zero frequency in the dual
sum contributes. This yields a key sum of the shape
$$
\sumsum_{p_1,h_1,l_1,p_2,h_2,l_2}\sum_{u\in\Fqt}|\what
K(u)|^2e_q((h_1p_1\bar{l}_1-h_2p_2\bar{l}_2)u^{-1})=
\sumsum_{p_1,h_1,l_1,p_2,h_2,l_2}
K_2(h_1p_1\bar{l}_1-h_2p_2\bar{l}_2),
$$
say.
\par
When $K$ is a multiplicative character, as in the work of
Holowinsky--Nelson, the proof is essentially finished then, since
$\what K(u)$ is a normalized Gauss sum, with a constant modulus,
hence~$K_2$ is simply a Ramanujan sum, which we can evaluate
explicitly.
\par
In general, we obtain cancellation using a very general
Fourier-theoretic bound for general bilinear forms
$$
\sum_{m\in\Fq}\sum_{n\in\Fq}\alpha_m\beta_nK_2(m-n),
$$
which involves only $L^2$-norm bounds for the coefficients and
$L^{\infty}$-norm bounds for the Fourier transform of~$K_2$
(see~Proposition~\ref{willprop}). The latter, it turns out, is
essentially~$|\what{K}|^2$, and we can obtain a good estimate purely
in terms of~$\norm{\what{K}}_{\infty}$.  This part of the argument is
performed in Section~\ref{sec:O}.


\section{Examples of trace functions}\label{sec-examples}

Theorem~\ref{thm:main} certainly applies to ``random'' $q$-periodic
functions $K\colon \Zz\to\Cc$, for all reasonable meanings of the word
``random'', but the basic motivating examples in number theory are
often provided by \emph{trace functions}.  Since there are by now a
number of surveys and discussions of important examples (see,
e.g.,~\cite[\S 10]{FKM1} or~\cite[\S 2.2]{short-sums} or~\cite{pisa}),
we only recall some of them for concreteness.

\begin{itemize}
\item If $r\geq 1$ is a fixed integer and $\chi_1$, \ldots, $\chi_r$
  are distinct non-trivial Dirichlet characters modulo~$q$, of
  order~$d_i\geq 2$, and if $f_1$, \ldots, $f_r$, $g$ are polynomials
  in $\Zz[X]$ such that either $\deg(g\mods q)\geq 2$, or one of
  the~$f_i\mods q$ is not proportional to a $d_i$-th power
  in~$\bFq[X]$, then
  $$
  K(n)=\chi_1(f_1(n))\cdots \chi_r(f_r(n))e\Bigl(\frac{g(n)}{q}\Bigr)
  $$
  has Fourier transform of size bounded only in terms of~$r$ and the
  degrees of the polynomials~$f_i$ and~$g$. (This is a consequence of
  the Weil bounds for exponential sums in one variable).
  \par
  Moreover (as is relevant only for Corollary~\ref{RScor} in this
  paper), $K$ is a trace function, and it is non-exceptional,
  unless~$g$ is of degree~$1$, $r=1$ and~$f_1$ is of degree~$\leq 1$.
\item  Let~$r\geq 2$. Define~$\Kl_r(0)=0$ and
  $$
  \Kl_r(n)=\frac{1}{q^{(r-1)-2}}
  \sum_{\substack{x_1,\ldots,x_r\in \Fq\\
      x_1\cdots x_r=n}}e\Bigl(\frac{x_1+\cdots+x_r}{q}\Bigr)
  $$
  for~$n\in\Fqt$ (these are hyper-Kloosterman sums). Then
  $\norm{\what{\Kl}_r}_{\infty}\leq c_r$, where~$c_r$ depends only
  on~$r$ (this depends on Deligne's general proof of the Riemann
  Hypothesis over finite fields and on the construction and basic
  properties of Kloosterman sheaves).
  \par
  For all~$r\geq 2$, the function~$\Kl_r$ is a trace function of a
  non-exceptional sheaf.
\end{itemize}

We also mention one important principle: if~$K$ is the trace function
of a Fourier sheaf~$\mcF$ (in the sense of~\cite{ESDE}),
then~$\what{K}$ is also such a function for a sheaf~$\ft(\mcF)$;
moreover, if~$\mcF$ has conductor~$c$ (in the sense of~\cite{FKM1}),
then $\ft(\mcF)$ has conductor $\leq 10c^2$, and in
particular~$\norm{\what{K}}_{\infty}\leq 10c^2$. 

Finally, one example that is not usually discussed explicitly
(formally, because it arises from a skyscraper sheaf) is
$K(n)=q^{1/2}\delta_{n=a\mods{q}}$, the $L^2$-normalized delta
function at a point~$a\in\Zz$. In this case, the Fourier transform is
an additive character, hence is bounded by one, and dividing
by~$q^{1/2}$, we obtain the bound
$$
\sum_{\substack{n\geq 1\\n\equiv a\mods{q}}}
\lambda(1,n)V\Bigl(\frac{n}{X}\Bigr) \ll
Z^{10/9}q^{-5/18+\eps}X^{5/6},
$$
under the assumptions of Theorem~\ref{thm:main}; in particular,
if~$X=q^{3/2}$ and~$V$ satisfies~(\ref{eq:Vprop}) for~$Z=1$, we get
$$
\sum_{\substack{n\geq 1\\n\equiv a\mods{q}}}
\lambda(1,n)V\Bigl(\frac{n}{q^{3/2}}\Bigr) \ll q^{35/36+\eps}
$$
for any~$\eps>0$.  Note that, under the generalized
Ramanujan--Petersson conjecture $\lambda(1,n)\ll n^{\eps}$, we would
obtain the stronger bound $q^{1/2+\eps}$ (and knowing the
approximation~$\lambda(1,n)\ll n^{\theta}$ for some $\theta<1/3$ would
be enough to get a non-trivial bound).  We discuss this case in
further details in Remark \ref{lastremark}, in the context of
Corollary \ref{cor-average}.

\section{Preliminaries}\label{sec-reminders}


\subsection{A Fourier-theoretic estimate}

A key estimate in Section~\ref{sec:O} will arise from the following
general bound (special cases of which have appeared before, e.g. in
the case of multiplicative characters for problems concerning sums
over sumsets).
 
\begin{proposition}\label{willprop}
  Let $A$ be a a finite abelian group, with group operation denoted
  additively.  Let $\alpha$, $\beta$ and $K$ be functions from~$A$
  to~$\Cc$. We have
  $$
  \Bigl|\sum_{m,n\in A}\alpha(m)\beta(n)K(m-n)\Bigr|\leq
  |A|^{1/2}\|\what K\|_\infty \|\alpha\|_2 \|\beta \|_2.
  $$
\end{proposition}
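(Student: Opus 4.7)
The plan is to view the double sum as an inner product, apply Cauchy--Schwarz to peel off $\alpha$, and then use Plancherel together with the convolution--multiplication identity to convert the remaining $L^2$-norm into an $L^{\infty}$-bound on $\what K$.

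Explicitly, I would first rewrite
\[
\sum_{m,n \in A} \alpha(m) \beta(n) K(m-n) = \sum_{m \in A} \alpha(m) (\beta * K)(m),
\]
where $(\beta * K)(m) := \sum_{n \in A} \beta(n) K(m-n)$ is the group-theoretic convolution on $A$. Cauchy--Schwarz bounds the modulus of the left-hand side by $\norm{\alpha}_2 \norm{\beta * K}_2$, so the task reduces to controlling $\norm{\beta * K}_2$ by $\norm{\what K}_{\infty}\norm{\beta}_2$ up to the correct power of $|A|$.

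For that step I would combine Plancherel with the standard convolution formula, which for the unitary Fourier normalization fixed in the paper reads $\what{\beta * K}(\psi) = |A|^{1/2}\, \what\beta(\psi)\, \what K(\psi)$. Bounding $|\what K(\psi)| \leq \norm{\what K}_{\infty}$ pointwise and applying Plancherel once more on $\what\beta$ then yields $\norm{\beta * K}_2 \leq |A|^{1/2} \norm{\what K}_{\infty} \norm{\beta}_2$, and combining with the Cauchy--Schwarz step produces the claimed inequality. I do not anticipate any real obstacle: the whole argument is a formal manipulation involving two applications of Plancherel, and the only thing to watch is the $|A|^{1/2}$ factor coming from the unitary normalization of~$\what{\cdot}$, since a miscount there would throw off the power of $|A|$ in the final bound.
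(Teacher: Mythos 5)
Your proof is correct, and it is essentially the same Fourier-theoretic argument as the paper's: the paper expands the full triple sum on the dual side via orthogonality of characters and then applies Cauchy--Schwarz and Plancherel there, whereas you peel off $\alpha$ by Cauchy--Schwarz on the physical side first and then compute $\norm{\beta*K}_2$ via Plancherel and the convolution identity; these are formally equivalent reorganizations of the same calculation, and your accounting of the $|A|^{1/2}$ from the unitary normalization is accurate.
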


\begin{proof}
  Using orthogonality of characters, we write
  $$
  \sum_{m,n\in A}\alpha(m)\beta(n) K(m-n)=
  \sum_{m,n,h\in A}\alpha(m)\beta(n) K(h)\frac{1}{|A|}
  \sum_{\psi\in\what{A}}\psi(h-(m-n)).
  $$
  Moving the sum over~$\psi$ to the outside, this is equal to
  $$
  |A|^{1/2}\sum_{\psi\in\what{A}}
  \what{\alpha}(\psi^{-1})\what{\beta}(\psi)\what{K}(\psi),
  $$
  whose absolute value is
  $$
  \leq |A|^{1/2}\norm{\what{K}}_{\infty}
  \sum_{\psi\in\what{A}}|\what{\alpha}(\psi^{-1})\what{\beta}(\psi)|
  \leq |A|^{1/2}
  \|\what K\|_\infty \|\alpha\|_2 \|\beta \|_2,
  $$
  by the Cauchy--Schwarz inequality and the discrete Plancherel
  formula.
\end{proof}


\subsection{Background on $\GL_3$-cusp forms}

We refer to \cite[Chap. 6]{Goldfeld} for notations. Let $\vphi$ be a
cusp form on~$\GL_3$ with level~$1$ and with Langlands parameters
$\mu=(\mu_1,\mu_2,\mu_3)\in\Cc^3$.  We denote by
$(\lambda(m,n))_{m,n\not=0}$ its Fourier--Whittaker coefficients, and
assume that
$\vphi$ is an eigenform of the Hecke operators $T_n$ and $T_n^*$,
normalized so that $\lambda(1,1)=1$. The eigenvalue of~$T_n$ is
then~$\lambda(1,n)$ for~$n\geq 1$.


Let $\theta_3=5/14$. The archimedean parameters and the Hecke
eigenvalues are bounded individually by
$$
|\Re(\mu_{i})|\leq \theta_3.\quad\quad |\lambda(1,p)|\leq 3p^{\theta_3}
$$
for any~$i$ and any prime number~$p$ (see ~\cite{KimSar}).

Average estimates follow from the Rankin--Selberg method. We have
\begin{equation}
  \label{eq:RS}
  \sum_{1\leq n\leq X}|\lambda(1,n)|^2\ll X^{1+\eps},
\end{equation}
and
\begin{equation}
  \label{eq-RS2}
  \sum_{1\leq m^2n\leq X}m|\lambda(m,n)|^2\ll X^{1+\eps},
\end{equation}
for $X\geq 2$ and any $\eps>0$, where the implied constant depends
only on $\varphi$ and $\eps$. (See~\cite{Molteni} and ~\cite[Lemma 2]{Munshi1}.)

The key analytic feature of $\GL_3$-cusp forms that we use (as in
previous works) is the Voronoi summation formula for~$\varphi$
(originally due to Miller--Schmid, and Goldfeld--Li
independently). Since our use of the ``archimedean'' part of the
formula is quite mild, we use the same compact formulation as
in~\cite[\S 2.3]{HN}, where references are given.

Let~$q\geq 1$ be an integer (not necessarily prime). For $n\in\Zz$, we
denote
$$
\Kl_2(n;q)=\frac{1}{\sqrt{q}}
\sum_{x\in (\Zz/q\Zz)^{\times}}
e\Bigl(\frac{nx+\bar{x}}{q}\Bigr)
$$
where~$\bar{x}$ is the inverse of~$x$ modulo~$q$.


\begin{lemma}[Voronoi summation formula]\label{Voronoi}
  For $\sigma\in\{-1,1\}$, there exist functions $\mcG^{\sigma}$,
  meromorphic on~$\Cc$, holomorphic for~$\Re(s)>\theta_3$, with
  polynomial growth in vertical strips~$\Re(s)\geq \alpha$ for
  any~$\alpha>\theta_3$, such that the following properties hold.
  \par
  Let $a$ and~$q\geq 1$ be coprime integers, let $X>0$, and let $V$ be
  a smooth function on~$]0,+\infty[$ with compact support.
  We have
  $$
  \sum_{n\geq 1}\lambda(1,n)e_q(an)V\Bigl(\frac{n}{X}\Bigr) = q^{3/2}
  \sum_{\sigma\in\{-1,1\}} \sum_{n\geq 1} \sum_{m\mid q}
  \frac{\lambda(n,m)}{nm^{3/2}} \Kl_2\Bigl(\sigma n\ov
  a;\frac{q}{m}\Bigr) \mcV_{\sigma}\Bigl(\frac{m^2n}{q^3/X}\Bigr),
  $$
  where 
  $$
  \mcV_{\sigma}(x)= \frac{1}{2\pi i}\int_{(1)}x^{-s}\mcG^{\sigma} (s+1)
  \Bigl(\int_{0}^{+\infty}V(y)y^{-s}\frac{dy}{y}\Bigr)ds.
  $$
\end{lemma}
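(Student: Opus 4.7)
The plan is to derive the formula by combining Mellin inversion of the test function $V$ with the functional equation of the additively twisted $L$-function of $\vphi$, in the classical style of Miller--Schmid and Goldfeld--Li. Each of the three features of the right-hand side, namely the divisor sum over $m\mid q$, the Kloosterman sum to modulus $q/m$, and the two archimedean kernels $\mcG^{\pm}$, arises at a distinct stage of this derivation.

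First I would write
$$
\sum_{n\geq 1}\lambda(1,n)e_q(an)V\Bigl(\frac{n}{X}\Bigr) = \frac{1}{2\pi i}\int_{(\sigma_0)}\widetilde V(s)\,X^{s}\,L_{a/q}(s,\vphi)\,ds,
$$
with $\widetilde V$ the Mellin transform of $V$, $\sigma_0>1+\theta_3$ ensuring absolute convergence, and $L_{a/q}(s,\vphi)=\sum_{n\geq 1}\lambda(1,n)e_q(an)n^{-s}$. I would then decompose the additive character $e_q(an)$ as a sum, indexed by divisors $m\mid q$, of primitive Dirichlet characters modulo $q/m$ weighted by Gauss sums; this realizes $L_{a/q}(s,\vphi)$ as a finite linear combination of twists $L(\vphi\otimes\chi,s)$. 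For each such $\chi$ I would invoke the Godement--Jacquet functional equation $\Lambda(\vphi\otimes\chi,s)=\varepsilon(\vphi\otimes\chi)\Lambda(\widetilde\vphi\otimes\overline{\chi},1-s)$, shift the contour to $\Re(s)=-\sigma_0$ (no poles intervene, since $\vphi$ is cuspidal and the twists are nontrivial), and change variables $s\mapsto 1-s$. On the dual side, the $L$-function of $\widetilde\vphi\otimes\overline{\chi}$ expands in the two-variable Fourier--Whittaker coefficients $\lambda(n,m)$ of $\widetilde\vphi$, while the remaining character sum $\sum_{\chi\bmod q/m}G(\chi)\overline{\chi}(n\overline{a})$ reassembles classically into $\Kl_2(\sigma n\overline{a};q/m)$; the sign $\sigma\in\{\pm 1\}$ corresponds to the two parity components of the archimedean Gamma-factor ratio, which one takes as the definition of $\mcG^{\sigma}(s+1)$. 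Holomorphy of $\mcG^{\sigma}$ for $\Re(s)>\theta_3$ then follows from the Kim--Sarnak bound $|\Re(\mu_i)|\leq\theta_3$, and polynomial growth in vertical strips is Stirling's formula. Undoing the Mellin transform recovers the stated expression with $\mcV_{\sigma}$.

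The main technical obstacle is the arithmetic bookkeeping in the dual Gauss-sum reassembly: one must carefully pair the primitive character sums with the multiplicative structure of $\lambda(n,m)$ to extract precisely $\Kl_2(\sigma n\overline{a};q/m)$ with the correct weighting $m^{-3/2}/n$. This step is classical but delicate, appearing in full generality in the original works of Miller--Schmid and Goldfeld--Li; since \cite[\S 2.3]{HN} packages the result in exactly the compact form needed here, I would in practice just quote their formulation rather than redo the decomposition.
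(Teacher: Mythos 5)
The paper does not in fact prove this lemma: it is quoted verbatim as ``the same compact formulation as in~\cite[\S 2.3]{HN}, where references are given'' (to Miller--Schmid and Goldfeld--Li). Your proposal sketches a derivation and then concludes by doing exactly the same thing, so in the end you and the paper converge on citing the same packaged statement.

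The sketch itself is a reasonable high-level account of the Goldfeld--Li route, but one step is stated more cleanly than it actually goes. You say that decomposing $e_q(an)$ into primitive characters modulo $q/m$ ``realizes $L_{a/q}(s,\vphi)$ as a finite linear combination of twists $L(\vphi\otimes\chi,s)$''; that is not quite right, because the multiplicative-character expansion of $e_q(an)$ only cleanly covers $n$ coprime to the relevant modulus, and the terms with $(n,q)>1$ must be tracked separately. It is precisely this that produces both the divisor sum over $m\mid q$ and the two-variable Hecke eigenvalues $\lambda(n,m)$ on the dual side (rather than the bare coefficients of $\widetilde\vphi\otimes\overline\chi$), together with the $m^{-3/2}$ weight. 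You do flag this under ``arithmetic bookkeeping,'' which is fair, but be aware that it is the central technical content, not an afterthought. Two smaller points: the parenthetical ``the twists are nontrivial'' is false (the trivial character occurs for $m=q$), though harmless since $\vphi$ being cuspidal already makes $L(\vphi,s)$ entire; and identifying $\sigma=\pm1$ with parity of the archimedean factor is correct in spirit but in the $\GL_3$ formulation the two kernels $\mcG^{\pm}$ are genuinely distinct rather than combining as in $\GL_2$. None of this is a gap relative to the paper, since the paper proves nothing here either.
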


Note that the functions~$\mcG^{\sigma}$ depend (only) on the
archimedean parameters of~$\varphi$. We record some properties of the
functions $\mcV_{\sigma}(x)$; for~$Z$ fixed they are already explained
in~\cite[\S 2.3]{HN}.

\begin{lemma}\label{bounds-for-V}
  Let $\sigma\in\{-1,1\}$. For any $j\geq 0$, any $A\geq 1$ and
  any~$\eps>0$, we have
$$
x^j\mcV_{\sigma}^{(j)}(x)\ll \min\Bigl( Z^{j+1}x^{1-\theta_3-\eps},
Z^{j+5/2+\eps}\Bigl(\frac{Z^3}{x}\Bigr)^A\Bigr)
$$
for~$x>0$, where the implied constant depends on~$(j,A,\eps)$.
Moreover, for $x\geq 1$, we have
$$
x^j\mcV_{\sigma}^{(j)}(x)\ll x^{2/3}\min(Z^j, x^{j/3})
$$
where the implied constant depends on~$j$.
\end{lemma}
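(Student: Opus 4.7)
The proof proceeds from the Mellin--Barnes representation of $\mcV_\sigma$ by contour shifting, plus a stationary-phase analysis for the last bound. Differentiating $j$ times under the integral sign gives
\[
x^j\mcV_{\sigma}^{(j)}(x) = \frac{(-1)^j}{2\pi i}\int_{(1)} s(s+1)\cdots(s+j-1)\, x^{-s}\, \mcG^{\sigma}(s+1)\, \tilde V(s)\, ds,
\]
where $\tilde V(s)=\int_0^\infty V(y) y^{-s-1}\,dy$ is, up to the sign of $s$, the Mellin transform of $V$. The key input is the rapid decay of $\tilde V$: since $\supp V\subset (1,2)$ and $V^{(k)}\ll Z^k$, $k$ integrations by parts yield
\[
|\tilde V(s)|\ll_k \frac{Z^k}{(1+|s|)^k}
\]
uniformly in any fixed vertical strip, which effectively localizes the contour integral to $|s|\ll Z$.

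For the first inequality inside the minimum, I shift the contour to $\Re s=\theta_3-1+\eps$; this is permitted since $\mcG^\sigma(s+1)$ is holomorphic for $\Re s>\theta_3-1$. The factor $x^{-s}$ then produces $x^{1-\theta_3-\eps}$, and routine bookkeeping --- using Stirling's bound for $\mcG^\sigma$ (polynomial in $|s|$ on vertical lines), the factor $(1+|s|)^j$ from the derivatives, and the decay of $\tilde V$ --- bounds the remaining integral by $O(Z^{j+1})$, the extra power of $Z$ accounting for the length of the effective integration range $|s|\ll Z$. For the second inequality, I shift instead to $\Re s=A$ with $A\geq 1$; the factor $x^{-s}$ contributes $x^{-A}$, and Stirling's bound for $\mcG^\sigma$ on this line, combined with the localization $|s|\ll Z$ and the polynomial factor, yields a contribution of order $Z^{j+5/2+\eps}Z^{3A}x^{-A}=Z^{j+5/2+\eps}(Z^3/x)^A$, where the exponent $5/2+\eps$ tracks the size of the Gamma-factor bound on a far-right vertical line.

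For the last inequality, valid on $x\geq 1$, contour shifting alone is no longer sharp, and I carry out a stationary-phase analysis on the Mellin--Barnes kernel. By Stirling, the inverse Mellin transform of $\mcG^\sigma(s+1)$ is a $\GL_3$-Bessel-type kernel $K(t)$ whose leading asymptotic for large $t$ has amplitude $\asymp t^{2/3}$ and oscillatory phase $e(3t^{1/3})$. Writing $\mcV_\sigma(x)=\int_0^\infty V(y)K(xy)\frac{dy}{y}$, the pointwise bound $K(t)\ll t^{2/3}$ yields $\mcV_\sigma(x)\ll x^{2/3}$ directly. For the derivatives, each application of $x\partial_x$ acts either on the cutoff $V$ (costing a factor $\ll Z$) or on the oscillatory phase $e(3(xy)^{1/3})$ (costing a factor $\ll x^{1/3}$), producing the factor $\min(Z^j,x^{j/3})$ after $j$ derivatives. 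The main obstacle is executing this stationary-phase analysis uniformly in $Z$, with precise control of the lower-order corrections in the Bessel asymptotic so that the amplitude and phase contributions separate cleanly at each derivative.
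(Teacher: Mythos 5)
Your treatment of the first two estimates (those inside the $\min$) matches the paper's argument exactly: the paper also proves them by shifting the Mellin--Barnes contour to $\Re s=\theta_3-1+\eps$ and to a far-right line $\Re s=A$, and the mechanism you describe --- $j$ differentiations producing the Pochhammer factor of size $(1+|s|)^j$, the Mellin transform of $V$ decaying like $Z^k/(1+|s|)^k$ so the contour is effectively truncated at $|s|\ll Z$, and Stirling for $\mcG^{\sigma}$ supplying the extra powers $Z^{5/2+\eps}$ and $Z^{3A}$ on a far-right line --- is the correct bookkeeping behind the paper's one-line statement. Both contour shifts are legitimate since $\mcG^{\sigma}(s+1)$ is holomorphic for $\Re s>\theta_3-1$ and the inner Mellin transform of $V$ is entire.

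For the last bound the paper does not give a proof at all: it simply invokes Blomer, \cite[Lemma 6]{Blomer}, which is an analysis of exactly the kind you sketch. So here you are taking a genuinely different route, namely attempting to rederive that lemma via the large-argument asymptotic of the $\GL_3$ Hankel kernel. Your identification of the amplitude $t^{2/3}$ and phase $e(3t^{1/3})$ is consistent with the normalisation of $\mcV_{\sigma}$ in the paper, and the heuristic that each $x\partial_x$ should cost either $x^{1/3}$ or $Z$ is correct. However, as written the argument has gaps you yourself flag: (i) the $Z$ cost does not literally come from $x\partial_x$ hitting $V$ --- $x\partial_x$ only acts on $K(xy)$, so that $(x\partial_x)^j\mcV_\sigma(x)=\int V(y)\,(t\partial_t)^jK(t)\big|_{t=xy}\,\frac{dy}{y}$; producing the $Z^j$ factor then requires $j$ integrations by parts in $y$ against the phase $e(3(xy)^{1/3})$, whose derivative is of size $x^{1/3}$; (ii) the asymptotic $K(t)=t^{2/3}e(3t^{1/3})(1+O(t^{-1/3}))$ is only the leading term of an asymptotic series, and the lower-order corrections, the transition region near $t\asymp 1$, and the error term after finitely many terms all have to be tracked to get the stated bound uniformly in $j$; and (iii) one should also justify opening the Mellin--Barnes integral into the Hankel form $\int V(y)K(xy)\,dy/y$, since the inverse Mellin transform defining $K$ does not converge absolutely. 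None of these is fatal, but they are precisely the content of the cited lemma of Blomer, and the paper avoids the issue by citation rather than by carrying out the stationary-phase analysis.
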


\begin{proof}
  The first inequality in the first bound follows by shifting the
  contour in $\mcV_{\pm}(x)$ to $\Re s=\theta_3-1+\eps$, while the
  second one follows by shifting contour to the far right.  The second
  bound follows from \cite[Lemma 6]{Blomer}.
\end{proof}

In particular, we see from the lemma that the functions
$\mcV_{\sigma}(x)$ decay very rapidly as soon as
$x\geq X^{\delta}Z^{3}$ for some~$\delta>0$.

\begin{remark}
  The bound $x^j\mcV_{\sigma}^{(j)}(x)\ll Z^{j+1}x^{1-\theta_3-\eps}$
  can be replaced by $x^j\mcV_{\sigma}^{(j)}(x)\ll Z^{j}x^{1-\eps}$,
  under the Ramanujan-Selberg conjecture, i.e., if $\Re (\mu_i)=0$ for
  all~$i$.
\end{remark}

\begin{remark}
  Let~$N\geq 1$, and define a congruence
  subgroup~$\Gamma_N\subset \SL_3(\Zz)$ by
  $$
  \Gamma_N=\Bigl\{\gamma\in\SL_3(\Zz)\,\mid\, \gamma\equiv\begin{pmatrix}
    *&*&*\\ *&*&*\\0&0&*
  \end{pmatrix}\mods N
  \Bigr\}.
  $$
  Zhou~\cite{FZ} has established an explicit Voronoi summation
  formula for $\GL_3$-cuspforms that are invariant under~$\Gamma_N$,
  for additive twists by~$e_q(an)$ when either $(q,N)=1$ or $N\mid
  q$. It should then be possible to use this formula to generalise
  Theorem~\ref{thm:main} to such cuspforms by slight adaptations of
  the argument below.
\end{remark}

\section{Amplification of the trace function}

We now begin the proof of Theorem~\ref{thm:main}. Let $q$ be a prime
number and $K$ a $q$-periodic function on $\Zz$. Let $\what{K}$ be its
discrete Fourier transform~(\ref{eq:fourierK}), which is also a
$q$-periodic function on~$\Zz$.  

Let $P,L\geq 1$ be two parameters to be chosen later, with $2P<q$ and
$2L<q$. We define auxiliary sets
\begin{align*}
  \rmP&:=\{p\in[P,2P[\,\mid\, p\equiv 1\mods{4}, \text{ prime}\}\\
  \rmL&:=\{l\in[L,2L[\,\mid\, l\equiv 3\mods{4}, \text{ prime}\}.
\end{align*}
Note that these sets are disjoint.  We denote
\begin{equation}\label{eq-H}
H=\frac{q^2L}{XP}.
\end{equation}
In the sequel, we assume that $H\geq 1$, that is
\begin{equation}\label{Hcond}
  XP\leq q^2L.
\end{equation}
Let $W$ be a smooth function on $\Rr$ that satisfies \eqref{eq:Vprop}
with $Z=1$ and furthermore $\what W(0)=1$.

We now use the notation~$K(n,h)$ and~$\what{K}(z,h)$ and the basic
amplicatifon idea discussed in Section~\ref{ssec-amplification}
(see~\ref{whatKzdef} and~\ref{eq-kn}).  We define
\begin{align*}
  \mcF
  &=
    \frac{1}{|\rmP||\rmL|}\sum_{p\in\rmP}\sum_{l\in\rmL}
    \sum_{h\in \Zz}S_{V}(K(\cdot,hp\ov l),X)
    \what{W}\Bigl(\frac{h}{H}\Bigr)\\
  &=\frac{1}{|\rmP||\rmL|}\sum_{p\in\rmP}\sum_{l\in\rmL}
    \sum_{h\in\Zz}\what{W}\Bigl(\frac{h}{H}\Bigr)
    \sum_{n\geq 1}\lambda(1,n)K(n,hp\ov l)V\Bigl(\frac{n}{X}\Bigr).
\end{align*}

Separating the contribution of $h=0$ and applying~(\ref{eq-detector}),
we can write
\begin{equation}\label{an-identity}
  \mcF=S_{V}(K,X)+\mcO+
  O\Bigl(\frac{q^{\eps}\norm{\what{K}}_{\infty}X}{q^{1/2}}\Bigr),
\end{equation}
for any~$\eps>0$, where
\begin{equation}\label{eq-m2}
\mcO= \frac{1}{|\rmP||\rmL|}\sum_{p\in\rmP}\sum_{l\in\rmL}
\sum_{h\neq 0}\what{W}\Bigl(\frac{h}{H}\Bigr)
\sum_{n\geq 1}\lambda(1,n)K(n,hp\ov l)V\Bigl(\frac{n}{X}\Bigr).
\end{equation}
Indeed, the contribution of $h=0$ is
\begin{align*}
  \frac{1}{|\rmP||\rmL|}\sum_{p\in\rmP}\sum_{l\in\rmL}
  S_{V}(K(\cdot,0),X)\what{W}(0)
  &=S_{V}(K,X)-
    \frac{\what K(0)}{|\rmP||\rmL|q^{1/2}}
    \sum_{p\in\rmP}\sum_{l\in\rmL}
    \sum_{n\geq 1}\lambda(1,n)V\Bigl(\frac{n}{X}\Bigr)\\
  &=S_{V}(K,X)+O\Bigl(\frac{\norm{\what{K}}_{\infty}X^{1+\eps}}{q^{1/2}}\Bigr),
\end{align*}
for any $\eps>0$, by \eqref{eq:RS}.

\section{Evaluation of $\mcF$}\label{sec:Fsum}

The evaluation of~$\mcF$ is close to the arguments of~\cite{HN}
and~\cite[\S 6]{Lin}. In fact, we could extract the desired bounds
from these sources (especially~\cite{Lin}) in the important special
case when the parameter~$Z$ is fixed as~$q$ varies. The reader who is
familiar with one of these references may therefore wish to skip the
proof of the next proposition in a first reading.


\begin{proposition}\label{proposition-for-F}
Let $\eta>0$. Assume that 
\begin{equation}
  \label{XZlower}
  X/Z\geq q^{1+\eta}.
\end{equation}
and
\begin{equation}\label{boundsPL}
L\leq P^4.
\end{equation}
Then for any $\eps>0$, we have
$$
\mcF\ll q^{\eps}\norm{\what{K}}_{\infty}
\Bigl(\frac{Z^{2}X^{3/2}P}{qL^{1/2}}+Z^{3/2}X^{3/4}(qPL)^{1/4}\Bigr),
$$
where the implied constant depends on $\varphi$, $\eps$ and $\eta$.
\end{proposition}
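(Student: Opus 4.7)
My plan is to follow the strategy sketched in Section~\ref{sec-principle}, adapting the Holowinsky--Nelson/Lin treatment but carrying the $Z$-dependence through explicitly. The first step is to open up the amplified trace function via the discrete Fourier inversion formula, writing
\[
K(n,hp\ov l)=\frac{1}{q^{1/2}}\sum_{z\in\Fqt}\what K(z)\,e_q(-hp\ov{lz}-nz)+\frac{\what K(0)}{q^{1/2}},
\]
and then applying Poisson summation in the $h$-variable. Since $\what W$ essentially truncates $h$ at scale $H$, this turns the $h$-sum into a short, reparametrizable sum in a new integer variable $r$, effectively eliminates the $z$-sum, and leaves an expression in which the single Fourier coefficient $\what K(-p\ov{lr})$ appears together with the phase $e_q(np\ov{lr})$ on $n$ and a smooth weight on $r$. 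At this stage the factor $\norm{\what K}_\infty$ may be pulled out uniformly.

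Next I apply the additive reciprocity
\[
\frac{p\ov{lr}}{q}\equiv -\frac{p\ov q}{lr}+\frac{p}{qlr}\pmod 1,
\]
which converts the modulus-$q$ phase on $n$ into a modulus-$lr$ phase, the extra factor $e(np/(qlr))$ being absorbed into the weight. I then apply the $\GL_3$ Voronoi summation formula (Lemma~\ref{Voronoi}) to the resulting $n$-sum, producing a dual sum over integers $n$ and divisors $m\mid lr$ with weight $\mcV_\sigma(m^2nX/(lr)^3)$ and Kloosterman factors $\Kl_2(\,\cdot\,;\,lr/m)$. By Lemma~\ref{bounds-for-V}, only the range $m^2n\ll Z^3(lr)^3q^\eps/X$ contributes effectively, and the hypothesis \eqref{XZlower} ensures that this truncation stays comfortably smaller than~$q$.

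The decisive step is Cauchy--Schwarz on the dual $n$-variable, keeping $p,l,r,m,\sigma$ inside. On one side I use the Rankin--Selberg bound \eqref{eq-RS2} to control the quadratic average of the Fourier coefficients $\lambda(n,m)$; on the other, squaring out yields a sum over pairs $(p_1,l_1,r_1,m_1,p_2,l_2,r_2,m_2)$ of products $\Kl_2(\,\cdot\,;\,l_1r_1/m_1)\ov{\Kl_2(\,\cdot\,;\,l_2r_2/m_2)}$. Exploiting that $\rmP$ and $\rmL$ consist of disjoint sets of primes, the moduli $l_ir_i/m_i$ are generically coprime, so the diagonal contribution is controllable elementarily while the off-diagonal is tamed by the Weil bound applied to each Kloosterman sum individually. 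The diagonal contribution produces the first term $Z^2X^{3/2}P/(qL^{1/2})$ of the claimed bound, and the off-diagonal produces the second term $Z^{3/2}X^{3/4}(qPL)^{1/4}$.

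The main obstacle is the careful bookkeeping of the $Z$-dependence. Integration by parts at the Poisson step (via derivatives of $\what W$) and at the Voronoi step (via Lemma~\ref{bounds-for-V}, whose two regimes for small versus large argument must be used appropriately) introduce nontrivial $Z$ factors that must be tracked in parallel with the combinatorial handling of the moduli $lr/m$ and the divisor~$m$. The hypothesis \eqref{boundsPL}, namely $L\leq P^4$, is precisely what ensures that the diagonal and off-diagonal contributions after Cauchy--Schwarz balance to give exactly the two summands stated in the proposition.
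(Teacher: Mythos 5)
Your outline follows the same broad architecture as the paper (Poisson in $h$, reciprocity, Voronoi, Cauchy--Schwarz with the Rankin--Selberg bound), but the final and decisive step is wrong in a way that matters. You propose to control the products of Kloosterman sums after Cauchy--Schwarz by applying the Weil bound \emph{to each Kloosterman sum individually}. That is exactly the ``trivial'' bound at this stage, and it is not strong enough to yield the proposition: after the truncation coming from Lemma~\ref{bounds-for-V}, the dual $n$-sum has length roughly $Z^3X^2P^3/(m^2q^3)$ against a $1/n$ weight, so bounding $|\Kl_2|\ll 1$ pointwise produces $\mathcal N_\sigma$ of size about $RP^2L^2$ (times $q^\eps\|\widehat K\|_\infty^2$), which is larger than the required $Z^4RPL + Z^3R^{3/2}q^3L^3/(X^2P)$ by roughly a factor $PL$. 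The cancellation one must exploit here is precisely in the \emph{averaged correlations} of products of Kloosterman sums of moduli $rl_i/m$, which the paper obtains by performing a \emph{second} Poisson summation in the dual $n$-variable, reducing to complete sums $C(n,p_1,p_2,l_1,l_2,r,m)$ that are evaluated/bounded via Lemma~\ref{lm-delta} (in turn via \cite[Lemma~A.2]{HN}). The zero frequency is a Ramanujan-type sum (vanishing unless $l_1=l_2$) and the nonzero frequencies enjoy square-root cancellation tied to the quantity $\Delta=q(l_2^2p_2-l_1^2p_1)/(l_1,l_2)^2$. Without that second Poisson and its structured cancellation, your argument does not close.

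Two smaller points. First, the Cauchy--Schwarz in the paper is applied with $(r,m,n)$ on the outside and only $(p,l)$ inside; you say you keep $p,l,r,m,\sigma$ inside, which would generate a doubled sum in $r,m$ as well and both alter the bookkeeping and lose the factor from the diagonal-in-$r,m$ Rankin--Selberg average. Second, your claim that the hypothesis $L\leq P^4$ is ``precisely what ensures the diagonal and off-diagonal balance'' is not accurate for this proposition: in the $\mathcal N_{\sigma}$ estimate the needed inequality is $RPL\gg P^2$, which already follows from~(\ref{XZlower}); the $L\le P^4$ hypothesis plays its role elsewhere (in the final optimization of the parameters $P,L$ against the treatment of $\mcO$), not in the internal balance of the $\mcF$ argument. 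Finally, you should also separate the contribution of $p\mid rl$ (the $\mcF''$ term) before applying Voronoi with modulus $rl$, since reciprocity and Voronoi require $(p,rl)=1$.
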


The remainder of this section is dedicated to the proof of this
proposition. We fix $\eta$ satisfying~(\ref{XZlower}).
 
We apply the Poisson summation formula to the sum over $h$ in
$\mcF$, for each $(p,l)$. We obtain
$$
\sum_{h\in \mathbf{Z}}K(n,hp\ov
l)\widehat{W}\Bigl(\frac{h}{H}\Bigr)=\frac{H}{q^{1/2}}\sum_{(r,q)=1}\what
K(-p\ov l\ov r)e_q(np\ov l\ov r)W\Bigl(\frac{r}{R}\Bigr),
$$
where
\begin{equation}\label{eq-R}
R=q/H=\frac{XP}{qL}.
\end{equation}
Hence it follows that
$$
\mcF= \frac{q^{3/2}L}{XP|\rmP||\rmL|}
\sum_{p\in\rmP}\sum_{l\in\rmL} \sum_{(r,q)=1}\what K(-p\ov l\ov r)
\sum_{n\geq 1}e_q(n p\ov l\ov r) \lambda(1,n)V\Bigl(\frac{n}{X}\Bigr)
W\Bigl(\frac{r}{R}\Bigr).
$$

Since $l\leq 2L<q$, we have $(q,rl)=1$ in the sums. By reciprocity, we
have
$$
e_q(n p\ov l\ov r)=e_{rl}(-np\ov q)e_{qrl}(np)
$$
for $n\geq 1$.

\begin{remark}
  Note that for $n\asymp X$, we have
  $$
  \frac{np}{qrl}\asymp \frac{XP}{qLXP/(qL)}\asymp 1,
  $$
  so that the additive character $e_{qrl}(np)$ doesn't oscillate.
\end{remark}

We define
$$
V_1(x)=e\Bigl(\frac{xXp}{qrl}\Bigr)V(x).
$$
We can then rephrase the above as
$$
\sum_{n\geq 1}\lambda(1,n)e_{rl}(-np\ov
q)e_{qrl}(np)V\Bigl(\frac{n}{X}\Bigr)= \sum_{n\geq
  1}\lambda(1,n)e_{rl}(-np\ov q)V_1\Bigl(\frac{n}{X}\Bigr),
$$
and
$$
\mcF
=\frac{q^{3/2}L}{XP|\rmP||\rmL|}\sum_{p,l}\sum_{r\geq 1}
\what K(-p\ov l\ov r)W\Bigl(\frac{r}{R}\Bigr)
\sum_{n\geq 1}\lambda(1,n)e_{rl}(-np\ov q)V_1\Bigl(\frac{n}{X}\Bigr).
$$

Let $\mcF'$ be the contribution to the last expression of
those $(p,r,l)$ such that $(p,rl)=1$, and let $\mcF''$ be the
remaining contribution.

In the case $p\mid r$, we can apply the Voronoi formula with modulus
$rl/p$; estimating the resulting expression directly, one obtains an
estimate for the contribution $\mcF''$ to $\mcF$ that is bounded by
$$
\mcF''\ll \frac{\norm{\what{K}}_{\infty}Z^2X^{3/2+\eps}}{qP}
$$
for any $\eps>0$ (see~\cite[\S 6]{Lin} for a similar computation,
where such contribution is denoted $\mcF_1^{\sharp}$).

Now let $p$ be such that $(p,rl)=1$.  By the Voronoi summation
formula (Lemma \ref{Voronoi}), we have
$$
\sum_{n\geq 1}\lambda(1,n)e_{rl}(-np\ov q)
V_1\Bigl(\frac{n}{X}\Bigr)=(rl)^{3/2} \sum_{\sigma\in\{-1,1\}}
\sum_{n\geq 1} \sum_{m\mid rl} \frac{\lambda(n,m)}{nm^{3/2}}
\Kl_2(\sigma\ov
pqn;rl/m)\mcV_{1,\sigma}
\Bigl(\frac{m^2n}{r^3l^3/X}\Bigr).
$$
Therefore $\mcF'=\mcF'_{1}+\mcF'_{-1}$, where
\begin{multline*}
  \mcF'_{\sigma}
  =\frac{q^{3/2}L}{XP|\rmP||\rmL|}\sum_{p\in\rmP}\sum_{l\in\rmL}
  \sum_{r\geq 1}\what K(-p\ov l\ov
  r)
  W\Bigl(\frac{r}{R}\Bigr)(rl)^{3/2}
  \\
  \sum_{n\geq 1}\sum_{m\mid rl}
  \frac{\lambda(n,m)}{nm^{3/2}}\Kl_2(\sigma \ov
  pqn;rl/m)\mcV_{1,\sigma}\Bigl(\frac{m^2n}{r^3l^3/X}\Bigr).
\end{multline*}
We re-arrange the sums to get
\begin{multline*}
  \mcF'_{\sigma} =\frac{(qRL)^{3/2}L}{XP|\rmP||\rmL|}
  \sum_{r\geq 1}\Bigl(\frac{r}{R}\Bigr)^{3/2}
  W\Bigl(\frac{r}{R}\Bigr) \sum_{n,m}
  \frac{\lambda(n,m)}{\sqrt{nm}} \\
  \sum_{p\in\rmP}\sum_{\substack{l\in\rmL\\m\mid rl}}
  \frac{(l/L)^{3/2}}{\sqrt{n}m}\what K(-p\ov l\ov r)\Kl_2(\sigma \ov
  pqn;rl/m)\mcV_{1,\sigma}\Bigl(\frac{m^2n}{r^3l^3/X}\Bigr).
\end{multline*}

Let~$\delta>0$ be a small parameter. For fixed $r$ and $l$, using the
bounds from Lemma \ref{bounds-for-V} with a suitably large value
of~$A$, the contribution to the sum over~$m$ and~$n$ of $(m,n)$
such that
\begin{equation}\label{eq-truncate}
  m^2n\geq q^{\delta}\frac{Z^3(rl)^3}{X}\asymp \frac{q^{\delta}Z^3X^2P^3}{q^3}
\end{equation}
is $\ll \norm{\what{K}}_{\infty}q^{-10}$ (say).
\par
To handle the remaining part of the sum, we apply the Cauchy--Schwarz
inequality to the sum over $(m,n)$, and we obtain
\begin{equation}\label{eq-mcfsigma}
  \mcF'_{\sigma} \ll \frac{(qRL)^{3/2}L}{XP|\rmP||\rmL|}
  \Bigl(\sum_{r\sim R} \sum_{\substack{n,m\geq 1\\m^2n<
      q^{\delta}Z^3X^2P^3/q^3}}
  \frac{|\lambda(n,m)|^2}{nm}\Bigr)^{1/2}\mathcal{N}_{\sigma}^{1/2}
  +\norm{\what{K}}_{\infty}q^{-1},
\end{equation}
where
\begin{multline*}
  \mathcal{N}_{\sigma}= \sum_{r,m\geq 1} W\Bigl(\frac{r}{R}\Bigr)
  \frac{1}{m^2}
  \sumsum_{\substack{p_1,p_2,l_1,l_2\\p_i\in\rmP,l_i\in\rmL\\m\mid
      (rl_1,rl_2)}} \Bigl(\frac{l_1l_2}{L^2}\Bigr)^{3/2}
  \what K(-p_1\ov l_1\ov r)\ov{\what K(- p_2\ov l_2\ov r)}\\
  \times\sum_{n\geq 1}\frac{1}{n}\Kl_2(\sigma \ov
  p_1qn;rl_1/m)\ov{\Kl_2(\sigma \ov p_2qn;rl_2/m)}
  \mcV_{1,\sigma}\Bigl(\frac{m^2n}{r^3l_1^3/X}\Bigr)
  \ov{\mcV_{1,\sigma}\Bigl(\frac{m^2n}{r^3l_2^3/X}\Bigr)}.
\end{multline*}
We will prove the bound
\begin{equation}\label{Nsigmabound}
\mathcal{N}_{\sigma}\ll
q^{\eps}\norm{\what{K}}_{\infty}^2
\Bigl(
Z^4RPL
+\frac{Z^3R^{3/2}q^3L^3}{X^2P}\Bigr).	
\end{equation}
for any~$\eps>0$.
If we select~$\delta>0$ small enough in terms of~$\eps$, then by 
the Rankin--Selberg bound~(\ref{eq-RS2}), we deduce that
\begin{equation}\label{eq-fpsigma}
\mcF'_{\sigma} \ll \frac{q^{3/2+\eps}Z^{\eps}L^{5/2}R^2}
{XP|\rmP||\rmL|}\mathcal{N}_{\sigma}^{1/2}+
\norm{\what{K}}_{\infty}q^{-1}
\end{equation}
for any $\eps>0$.
We conclude, using~(\ref{eq-mcfsigma}) and recalling
that~$R=XP/(qL)$, that
\begin{align*}
  \mcF'_{\sigma}
  &\ll 
    \frac{q^{\eps}\norm{\what{K}}_{\infty}
    R^2(qL)^{3/2}}{XP^2}\bigg(Z^4RPL+\frac{Z^3R^{3/2}q^3L^3}{X^2P}
    \bigg)^{1/2}
  \\
  &\ll
    q^{\eps}\norm{\what{K}}_{\infty}
    \Bigl(\frac{Z^{2}X^{3/2}P}{qL^{1/2}}+Z^{3/2}X^{3/4}(qPL)^{1/4}\Bigr).
\end{align*}
for any~$\eps>0$. Assuming \eqref{Nsigmabound}, this concludes the proof of
Proposition~\ref{proposition-for-F}.

\subsection{Proof of \eqref{Nsigmabound}} We will now investigate the inner sum over~$n$
in~$\mathcal{N}_{\sigma}$, and then perform the remaining summations
(over $r$, $m$, $p_i$, $l_i$) essentially trivially. We let
$$
U=\frac{q^{\delta/2}Z^{3/2}XP^{3/2}}{q^{3/2}},
$$
so that the sum over~$m$ has been truncated to~$m\leq U$.

Let~$F$ be a smooth non-negative function on $\Rr$ which is supported
on $[1/2,3]$ and equal to $1$ on $[1,2]$. Let $Y\geq 1$ be a parameter
with
\begin{equation}\label{eq-boundy}
Y\leq \frac{q^{\delta}Z^3X^2P^3}{m^2q^3},
\end{equation}
and define
$$
\mathcal{W}_{Y}(x)=\frac{1}{x}
\mcV_{1,\sigma}\Bigl(\frac{m^2xY}{r^3l_1^3/X}\Bigr)
\ov{\mcV_{1,\sigma}\Bigl(\frac{m^2xY}{r^3l_2^3/X}\Bigr)} F(x).
$$
We study the sums
$$
\mathcal{P}_Y= \frac{1}{Y}\sum_{n\geq 1}\Kl_2(\ov
p_1qn;rl_1/m)\ov{\Kl_2(\ov p_2qn;rl_2/m)}
\mathcal{W}_{Y}\Bigl(\frac{n}{Y}\Bigr),
$$
and their combinations
\begin{equation}\label{eq-py}
  \mathcal{N}_{Y,\sigma}= \sum_{r\geq 1}\sum_{1\leq m\leq U}
  W\Bigl(\frac{r}{R}\Bigr)
  \frac{1}{m^2}
  \sumsum_{\substack{p_1,p_2,l_1,l_2\\p_i\in\rmP,l_i\in\rmL\\m\mid
      (rl_1,rl_2)}} \Bigl(\frac{l_1l_2}{L^2}\Bigr)^{3/2}
  \what K(-p_1\ov l_1\ov r)\ov{\what K(- p_2\ov l_2\ov
    r)}\,\mathcal{P}_Y.
\end{equation}

We will prove the following bound: for any $\eps>0$, if $\delta$ is chosen small enough we have
\begin{equation}\label{NYbound}
\mathcal{N}_{Y,\sigma}\ll q^{\eps}Z^4\norm{\what{K}}_{\infty}^2RPL+q^{\eps}\norm{\what{K}}_{\infty}^2\frac{Z^3R^{3/2}q^3L^3}{X^2P}.
\end{equation}
Performing a dyadic partition of unity on the $n$ variable in $\mathcal{N}_{\sigma}$ we deduce \eqref{Nsigmabound}.
\subsection{Bounding $\mathcal{P}_Y$} 
We apply the Poisson
summation formula~(\ref{eq-poisson}) with modulus $r[l_1,l_2]/m$, to
get
\begin{equation}\label{after-poisson}
  \mathcal{P}_Y
  =\frac{1}{r[l_1,l_2]/m}\sum_{n\in\Zz}C(n,p_1,p_2, l_1,l_2,r, m)
  \what{\mathcal{W}}_Y\Bigl(\frac{nY}{r[l_1,l_2]/m}\Bigr),
\end{equation}
where
\begin{eqnarray*}
  C(n,p_1,p_2,l_1,l_2, r,m)=
  \sum_{\beta \mods {r[l_1,l_2]/m}}
  \Kl_2(\ov p_1q\beta;rl_1/m)
  \ov{\Kl_2(\ov p_2q\beta;rl_2/m)} \, e_{r[l_1,l_2]/m}(\beta n),
\end{eqnarray*}
with~$\ov{p}_i$ denoting the inverse of~$p_i$ modulo~$rl_i/m$. We write
$$
\mathcal{P}_Y=\mathcal{P}_0+\mathcal{P}_1
$$
where
$$
\mathcal{P}_{0}=\frac{1}{r[l_1,l_2]/m} C(0,p_1,p_2,l_1,l_2,r,
m)\what{\mathcal{W}}_Y(0)
$$
is the contribution of the term~$n=0$ and~$\mathcal{P}_1$ is the
remainder in \eqref{after-poisson}. We show below that for any $\eps>0$, if $\delta$ is chosen small enough we have
\begin{equation}\label{P0bound}
\mathcal{P}_{0}\ll \delta_\stacksum{l_1=l_2}{p_1=p_2}(qr)^{\eps}Z^4+\delta_\stacksum{l_1=l_2}{p_1\not=p_2}\mathcal{P}_0\ll
q^{\eps}Z^4\frac{m}{rl}
\Bigl(\frac{rl}{m},p_2-p_1\Bigr)	
\end{equation}
and that
\begin{equation}\label{P1bound}\mathcal{P}_{1}\ll q^{2\eps}Z^3\Bigl(\frac{r[l_1,l_2]1}{m}\Bigr)^{1/2}\frac{m^2q^3}{X^2P^3}.\end{equation}
Using \eqref{P0bound} in  the sum~(\ref{eq-py}), we find that the contribution to~$\mathcal{N}_{Y,\sigma}$ of $\mathcal{P}_0$ is bounded by 
\begin{gather}\nonumber
  \sum_{r\geq 1} W\Bigl(\frac{r}{R}\Bigr) \sum_{1\leq m\leq U}
  \frac{1}{m^2}
  \sumsum_{\substack{p_1,p_2,l\\p_i\in\rmP,l\in\rmL\\m\mid rl}}
  \Bigl(\frac{l}{L}\Bigr)^{3} \what K(-p_1\ov l\ov r)\ov{\what K(-
    p_2\ov l\ov r)}\,\mathcal{P}_0
  \\ \nonumber
  \ll q^{\eps}Z^4\norm{\what{K}}_{\infty}^2 \sum_{r\asymp R}
  \sum_{1\leq m\leq U}\frac{1}{m^2} \sum_{\substack{l\in\rmL\\m\mid
      rl}} \Bigl(\sum_{p\in\rmP}1+
  \frac{m}{rl}\sum_{\substack{p_1,p_2\in\rmP\\p_1\not=p_2}}\Bigl(\frac{rl}{m},p_2-p_1\Bigr)
  \Bigr)\\ \nonumber
  \ll q^{\eps}Z^4\norm{\what{K}}_{\infty}^2 \Bigl(RPL+ \sum_{r\asymp
    R} \frac{1}{r}\sum_{1\leq m\leq U}\frac{1}{m}
  \sum_{\substack{l\in\rmL\\m\mid rl}}\frac{1}{l} \sum_{d\mid rl/m}\varphi(d)
  \sum_{\substack{p_1,p_2\in\rmP\\p_1\equiv p_2\mods{d}}}1
  \Bigr)\\
  \ll q^{\eps}Z^4\norm{\what{K}}_{\infty}^2 (RPL+P^2)\ll
  q^{\eps}Z^4\norm{\what{K}}_{\infty}^2RPL.\label{NYP0}
\end{gather}
Here $RPL=XP^2/q\gg P^2$, since $X$ satisfies \eqref{XZlower}.
Using \eqref{P1bound} we find that the contribution of~$\mathcal{P}_1$
to~$\mathcal{N}_{\sigma,Y}$ is bounded by
\begin{multline}\label{NYP1}
  \ll q^{\eps}Z^3\norm{\what{K}}_{\infty}^2\sum_{r\asymp R}
  \sum_{1\leq m\leq U}
  \frac{1}{m^2}
  \sum_{\substack{p_1,p_2,l_1,l_2\\m\mid (rl_1,rl_2)}}
  \Bigl(\frac{r[l_1,l_2]}{m}\Bigr)^{1/2}\frac{m^2q^3}{X^2P^3}\\
  \ll q^{\eps}Z^3\norm{\what{K}}_{\infty}^2
  R\frac{q^3}{X^2P^3}(P^2L^2)(RL^2)^{1/2}
  \ll q^{\eps}\norm{\what{K}}_{\infty}^2\frac{Z^3R^{3/2}q^3L^3}{X^2P}.
\end{multline}


Combining \eqref{NYP0} and \eqref{NYP1} we obtain \eqref{NYbound}. 

\subsection{Proof of \eqref{P0bound} and \eqref{P1bound}}

The next two lemmas evaluate the archimedan and non-archimedan Fourier transforms which occur in \eqref{after-poisson} :

\begin{lemma}\label{lm-trunc}
  With notation as above, in particular~\emph{(\ref{eq-boundy})},
  let~$j\geq 0$ be an integer and let~$\eps>0$.
  \par
  \emph{(1)} We have
  \begin{equation}\label{eq-wyzero}
    \what{\mathcal{W}}_Y(0)\ll q^{\delta}Z^4.
  \end{equation}
  \par 
  \emph{(2)} We have
  \begin{equation}\label{derivatives-of-F}
    x^j\mathcal{W}^{(j)}_{Y}(x)\ll
    \begin{cases}
      Z^{2+j}\Bigl(\frac{m^2Yq^3}{X^2P^3}\Bigr)^{2-2\theta_3-\eps}
      & \quad \text{if } Y<\frac{X^2P^3}{m^2q^3}\\
      \Bigl(\frac{m^2Yq^3}{X^2P^3}\Bigr)^{4/3+j/3} & \quad \text{if }
      Y\geq \frac{X^2P^3}{m^2q^3},
    \end{cases}	
  \end{equation}
  where the implied constants depends on~$(\varphi,j,\eps)$.
  \par
  \emph{(3)} If $1\leq |n|\leq q^{\delta}Z\frac{r[l_1,l_2]}{mY}$
  then we have
  $$
  \what{\mathcal{W}}_{Y}\Bigl(\frac{nY}{r[l_1,l_2]/m}\Bigr)\ll
  q^{\delta}Z^{2}\frac{m^2Yq^3}{X^2P^3}.
  $$
\end{lemma}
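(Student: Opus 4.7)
The plan is to deduce all three parts from a direct application of the two bounds provided by Lemma \ref{bounds-for-V}. I would first write
$$
\mathcal{W}_Y(x) = g(x)\, \mcV_{1,\sigma}(a_1 x)\, \overline{\mcV_{1,\sigma}(a_2 x)},\qquad g(x) := F(x)/x,\qquad a_i := \frac{m^2 Y}{r^3 l_i^3/X},
$$
where $g$ is smooth and supported on $[1/2,3]$, so that $g$ and all its derivatives are $O(1)$ there. Since $r \asymp R = XP/(qL)$ and $l_i \asymp L$ in the support of the summand, both $a_i$ are of common order of magnitude $a := m^2Yq^3/(X^2P^3)$, and the hypothesis \eqref{eq-boundy} reads $a \ll q^{\delta} Z^3$. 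The two regimes in the statement, $Y < X^2P^3/(m^2q^3)$ and $Y \geq X^2P^3/(m^2q^3)$, correspond exactly to $a < 1$ and $a \geq 1$.

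For part (2), I would apply the Leibniz rule to the triple product and use $(\mcV_{1,\sigma}(a_i x))^{(k)} = a_i^{k}\mcV_{1,\sigma}^{(k)}(a_i x)$. At $x \asymp 1$, the first bound in Lemma \ref{bounds-for-V} yields $|a_i^{k}\mcV_{1,\sigma}^{(k)}(a_i x)| \ll Z^{k+1}a^{1-\theta_3-\eps}$ when $a \leq 1$, while in the regime $a \geq 1$ the second bound combined with the elementary inequality $\min(Z^k,(a_i x)^{k/3}) \leq (a_i x)^{k/3}$ gives $|a_i^{k}\mcV_{1,\sigma}^{(k)}(a_i x)| \ll a^{(2+k)/3}$. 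Distributing the $j$ derivatives among the three factors according to Leibniz and summing the exponents of $Z$ and of $a$ then produces the claimed bounds $Z^{j+2}a^{2-2\theta_3-\eps}$ (first regime) and $a^{4/3+j/3}$ (second regime).

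Parts (1) and (3) will both follow from the trivial estimate $|\what{\mathcal{W}}_Y(\xi)| \leq \|\mathcal{W}_Y\|_1 \ll \|\mathcal{W}_Y\|_{\infty}$, valid because $\supp \mathcal{W}_Y \subset [1/2,3]$ has bounded measure, combined with the $j=0$ case of part (2). For part (1), in the first regime $\|\mathcal{W}_Y\|_{\infty} \ll Z^2 a^{2-2\theta_3-\eps} \ll Z^2$ since $a \leq 1$, while in the second $\|\mathcal{W}_Y\|_{\infty} \ll a^{4/3} \leq (q^{\delta}Z^3)^{4/3} \ll q^{O(\delta)}Z^4$; together these give $\what{\mathcal{W}}_Y(0) \ll q^{\delta}Z^4$ after harmlessly enlarging $\delta$. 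For part (3), I would sharpen the $L^{\infty}$-bound to $\|\mathcal{W}_Y\|_{\infty} \ll q^{O(\delta)}Z^2 a$ in both regimes: in the first, since $\theta_3 = 5/14 < 1/2$ gives $2-2\theta_3-\eps > 1$, the inequality $a^{2-2\theta_3-\eps} \leq a$ holds on $a \leq 1$; in the second, $a^{4/3} = a\cdot a^{1/3} \leq q^{\delta/3}Z\cdot a \leq q^{\delta/3}Z^2 a$. Note that the condition $|\xi| \leq q^{\delta}Z$ encoded in the range of $n$ plays no role, and no integration by parts is needed; the hypothesis $|n| \geq 1$ only serves to distinguish part (3) from part (1). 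The main obstacle is purely bookkeeping, namely juggling the two regimes $a \lessgtr 1$ cleanly while tracking the interplay of $Z$ and $a$ through the constraint $a \ll q^{\delta}Z^3$.
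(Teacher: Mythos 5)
Your proposal is correct and follows essentially the same route as the paper: decompose $\mathcal{W}_Y$ as $F(x)/x$ times the product of two dilated copies of $\mcV_{1,\sigma}$, apply the Leibniz rule together with the two regimes of Lemma~\ref{bounds-for-V} (with $a_ix\asymp a\asymp m^2Yq^3/(X^2P^3)$ on the support of $F$), and obtain parts (1) and (3) from the $j=0$ case of part (2) via $\|\what{\mathcal{W}}_Y\|_\infty\leq\|\mathcal{W}_Y\|_1\ll\|\mathcal{W}_Y\|_\infty$ and the constraint $a\ll q^\delta Z^3$. Your explicit bookkeeping of the $q^{O(\delta)}$ losses and the observation that the lower and upper ranges of $n$ play no role in part (3) are both accurate and consistent with how the lemma is subsequently used.
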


\begin{proof}
  Since~$F_Y$ has support in $[1/2,3]$, part~(1) follows from the
  bound~$\mathcal{V}_{\sigma}(x)\ll x^{2/3}$ for~$x\geq 1$ and the
  fact that
  $$
  \frac{m^2xY}{r^3l_i^3/X}\asymp \frac{m^2Y}{X^2P^3/q^3}\ll
  q^{\delta}Z^3.
  $$
  \par
  Part~(2) is obtained using the estimates
  \begin{gather*}
    x^j\mcV_{\pm}^{(j)}(x)\ll Z^{j+1}x^{1-\theta_3-\eps}\quad \text{
      if }
    0<x<1,\\
    x^j\mcV_{\pm}^{(j)}(x)\ll x^{2/3+j/3}\quad \text{ if } x\geq 1
  \end{gather*}
  (see Lemma \ref{bounds-for-V}), noting again that
  $\frac{(rl)^3}{X}\sim \frac{X^2P^3}{q^3}$.
  \par
  From \eqref{derivatives-of-F}, for any $n$ such that
  $1\leq |n|\leq q^{\delta}Z\frac{r[l_1,l_2]}{mY}$, we get the estimates
  $$
  \what{\mathcal{W}}_Y\Bigl(\frac{nY}{r[l_1,l_2]/m}\Bigr)\ll
   \begin{cases}
     Z^{2}\Bigl(\frac{m^2Yq^3}{X^2P^3}\Bigr)^{2-2\theta_3-\eps},&
     \quad
     \mathrm{if}\, Y<\frac{X^2P^3}{m^2q^3}\\
     \Bigl(\frac{m^2Yq^3}{X^2P^3}\Bigr)^{4/3},& \quad \mathrm{if}\,
     Y\geq \frac{X^2P^3}{m^2q^3}.
   \end{cases}
   $$
   Since $m^2Yq^3/(X^2P^3)\ll q^{\delta}Z^3$, the second bound is also
   $$
   \ll q^{\delta/3}Z\frac{m^2Yq^3}{X^2P^3}.
   $$
   Together with the first bound, this implies part (3) of the lemma.
\end{proof}

\begin{lemma}\label{lm-delta}
  Let $n\in\Zz$, $p_1$, $p_2$ primes, $l_1$, $l_2$ primes, $r\geq 1$
  and $m\geq 1$ be integers with $m\mid rl_i$.
  \par
  \emph{(1)} We have
  $$
  C(0,p_1,p_2,l_1,l_2,r,m)=0
  $$
  unless $l_1=l_2$.
  \par
  \emph{(2)} For $l$ prime with $m\mid rl$, we have
  $$
  |C(0,p_1,p_2,l,l,r,m)|\leq (rl/m,p_2-p_1)
. $$
  \par
  \emph{(3)} Let
  $$
  \Delta=q\frac{l_2^2p_2-l_1^2p_1}{(l_1,l_2)^2}.
  $$
  We have
  $$
  |C(n,p_1,p_2,l_1,l_2,r,m)|\leq
  2^{O(\omega(r))}\Bigl(\frac{r[l_1,l_2]}{m}\Bigr)^{1/2}
  \frac{(\Delta,n,m/rl_1,m//rl_2)}{(n,m/rl_1,m/rl_2)^{1/2}}.
  $$
  \par
  \emph{(4)} Suppose that $\Delta=0$. If $(p_1,p_2)$ are
  $\equiv 1\mods{4}$ and $(l_1,l_2)$ are $\equiv 3\mods{4}$, then
  $p_1=p_2$ and $l_1=l_2$. For $p$ prime and $l$ prime with
  $m\mid rl$, we have
$$
|C(n,p,p,l,l,r,m)|\leq 2^{O(\omega(r))}
\Bigl(\frac{rl}{m}\Bigr)^{1/2}\Bigl(n,\frac{rl}{m}\Bigr)^{1/2},
$$
In particular, $C(0,p,p,l,l,r,m)\ll r^{\eps}\frac{rl}{m}$ for
any~$\eps>0$.
\end{lemma}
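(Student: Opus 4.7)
I will expand each Kloosterman sum via the Fourier definition $\Kl_2(a;N)=N^{-1/2}\sum_{(x,N)=1}e_N(ax+\bar x)$ and carry out the sum over $\beta$ modulo $M:=r[l_1,l_2]/m$, which collapses $C$ to a bilinear character sum
$$C(n,\ldots)=\frac{M}{\sqrt{(rl_1/m)(rl_2/m)}}\sum_{\substack{(x_1,rl_1/m)=1\\(x_2,rl_2/m)=1\\L(x_1,x_2)\equiv -n\,(M)}}e_{rl_1/m}(\bar x_1)\,e_{-rl_2/m}(\bar x_2),$$
where $L(x_1,x_2):=q(l_2/(l_1,l_2))\bar p_1 x_1-q(l_1/(l_1,l_2))\bar p_2 x_2$ and $q$ is invertible modulo $M$ (since $q$ is a prime exceeding $r,l_1,l_2$, and $m\mid rl_i$). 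All subsequent bounds are read from this identity.

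For (1), with $n=0$ and $l_1\ne l_2$ (distinct primes, so $(l_1,l_2)=1$ and $m\mid r$), I reduce $L\equiv 0\pmod M$ modulo $l_1$. Since $v_{l_1}(rl_1/m)\geq 1$, the coprimality $(x_1,rl_1/m)=1$ forces $l_1\nmid x_1$; yet the reduced congruence is $ql_2\bar p_1 x_1\equiv 0\pmod{l_1}$, forcing $l_1\mid x_1$, a contradiction. Hence $C=0$. For (2), with $l_1=l_2=l$, the linear condition collapses to $p_2x_1\equiv p_1x_2\pmod{rl/m}$, giving $x_2=p_2\bar p_1 x_1$; the remaining sum is the Ramanujan-type sum
$$\sum_{(x,rl/m)=1}e_{rl/m}(\bar x\,\bar p_2(p_2-p_1)),$$
whose modulus is at most $(rl/m,p_2-p_1)$.

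For (3), I apply the Chinese Remainder Theorem to factor $C$ through the prime-power decomposition of $M$. At each prime $\ell\mid r$ with $\ell\nmid ql_1l_2$, the local sub-sum is a twisted product of two classical Kloosterman sums, controlled by the Weil bound with square-root cancellation up to a local divisor factor; taking the product yields the global factor $2^{O(\omega(r))}(r[l_1,l_2]/m)^{1/2}$. At the primes $l_1,l_2,q$ and at the components where $rl_1/m$ and $rl_2/m$ share a common factor, the linear condition is analyzed explicitly: the numerator $(\Delta,n,m/rl_1,m/rl_2)$ (with $\Delta$ essentially the determinant of the $(x_1,x_2)$-linear system) records compatibility of $L\equiv -n$ at primes where the two Kloosterman arguments become proportional, while the denominator $(n,m/rl_1,m/rl_2)^{1/2}$ captures the extra square-root saving at primes where both arguments vanish simultaneously. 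Part (4) is then immediate: $\Delta=0$ reads $l_2^2p_2=l_1^2p_1$ in $\Zz$, and since the $p_i\equiv 1$ and $l_i\equiv 3\pmod 4$ are primes in disjoint congruence classes, unique factorization forces $l_1=l_2$ and $p_1=p_2$; the bound on $|C(n,p,p,l,l,r,m)|$ is part (3) specialized, and the evaluation $C(0,p,p,l,l,r,m)\ll r^\eps(rl/m)$ comes from the identity $C=\sum_\beta|\Kl_2(\bar pq\beta;rl/m)|^2 e_{rl/m}(\beta n)$ which, after Fourier expansion and orthogonality, reduces to a one-variable Kloosterman-type sum controlled by Weil's bound with the gcd factor $(n,rl/m)^{1/2}$.

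The principal obstacle is the bookkeeping in part (3): one must track how the primes dividing $r,l_1,l_2$, and $m$ interact with the two Kloosterman moduli so that Weil's bound applies to sums with provably nonzero parameters, and one must identify exactly which configurations produce the gcd factor involving $\Delta$. Once this case analysis is organized, the assembly uses only standard tools (Fourier expansion, CRT, and Deligne's bound for one-variable exponential sums).
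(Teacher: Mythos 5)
Your parts (1), (2), and (4) are correct and take essentially the same elementary route as the paper: part (1) by observing that when $l_1\ne l_2$ the linear congruence has no solutions (the paper states it as the sum vanishing unless $[l_1,l_2]=l_1=l_2$, which is the same conclusion); part (2) by recognising that after the collapse $x_2\equiv p_2\bar p_1 x_1$ one obtains the Ramanujan sum $\sum_{(x,rl/m)=1}e_{rl/m}(x(p_2-p_1))$, which the paper also writes out explicitly and bounds by $(rl/m,p_2-p_1)$; and the first claim of part (4) by the same unique-factorisation argument using $p_i\equiv 1$, $l_j\equiv 3\pmod 4$. One small omission: in part (1) your argument implicitly uses $(p_1,l_1)=1$ (to invert $p_1$ modulo $l_1$); this holds in the context of the paper because $\rmP$ and $\rmL$ are disjoint and $(p_i,rl_i)=1$ is assumed in the $\mcF'$ sum, but it is worth flagging as a hypothesis for the reduction modulo $l_1$ to go through.

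The genuine divergence from the paper, and the only gap of substance, is part (3). The paper does not reprove this estimate: it invokes \cite[Lemma A.2 (A.3)]{HN} (applied with $(\xi,s_1,s_2)=(n,rl_1/m,rl_2/m)$ and $(a_1,b_1,a_2,b_2)=(q,p_1,q,p_2)$) precisely in order to avoid the case analysis you describe. Your plan — factor $C$ through CRT and prime-power local factors of $r[l_1,l_2]/m$, apply Weil's bound where the two Kloosterman parameters are units, and track the gcd factors $(\Delta,n,rl_1/m,rl_2/m)$ and $(n,rl_1/m,rl_2/m)$ at the primes where the two arguments become proportional or both vanish — is the right strategy and is what one would do to reconstruct the Holowinsky--Nelson lemma, but as written it is a sketch: you acknowledge yourself that "one must track how the primes dividing $r,l_1,l_2,m$ interact" and you do not exhibit the local computations that produce precisely the factor $2^{O(\omega(r))}(r[l_1,l_2]/m)^{1/2}(\Delta,n,rl_1/m,rl_2/m)/(n,rl_1/m,rl_2/m)^{1/2}$. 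So the comparison is: the paper outsources part (3) to a single citation, while you propose to reprove it from scratch, which would require carrying out the local analysis at each prime power (and in particular at primes dividing $\gcd(rl_1/m,rl_2/m)$, where the extra saving $(n,rl_1/m,rl_2/m)^{1/2}$ arises). Either route is legitimate; the citation is shorter, the reproof is more self-contained, but in its present form your part (3) is not yet a proof. (Also note the statement of (3) in the paper contains typos — $m/rl_1$, $m//rl_2$, $m/rl_2$ should read $rl_1/m$, $rl_2/m$, $rl_2/m$, as is clear from how the lemma is applied in the estimation of $\mathcal{P}_1$; you implicitly used the corrected form.)
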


\begin{proof}
  Part (1) follows by direct computation (the sum vanishes unless
  $[l_1,l_2]=l_1$ and $[l_1,l_2]=l_2$). If $n=0$ and $l_1=l_2$, then
$$
|C(0,p_1,p_2,l,l,r,m)|=\bigg|\sum_{\substack{x\bmod rl/m\\(x,rl/m)=1}}
e\Bigl(\frac{( p_2-p_1)x}{rl/m}\Bigr)\bigg| = \bigg| \sum_{d|(rl/m, p_2-p_1)} d \mu\Bigl(\frac{rl}{md} \Bigr) \bigg| \leq (rl/m,p_2-p_1)
$$
by a classical bound for Ramanujan's sum, which proves (2). Finally, part (3) is a special case of~\cite[Lemma
A.2 (A.3)]{HN} (applied with 
$(\xi,s_1,s_2)=(n,rl_1/m,rl_2/m)$, and
$(a_1,b_1,a_2,b_2)=(q,p_1,q,p_2)$ in the definition of~$\Delta$).
If $\Delta=0$, then necessarily $p_1=p_2$ and $l_1=l_2$, and we
obtain~(4) immediately.
\end{proof}

\subsubsection{Estimation of $\mathcal{P}_0$}

Note that~$\mathcal{P}_0=0$ unless $l_1=l_2$. If that is the case, we
denote~$l=l_1=l_2$. We then have two bounds for~$\mathcal{P}_{0}$. If
we have also~$p_1=p_2$, then the quantity~$\Delta$ of
Lemma~\ref{lm-delta} (3) is zero.
Since~$\what{\mathcal{W}}_Y(0)\ll q^{\eps}Z^4$ for any~$\eps>0$
(provided~$\delta>0$ is chosen small enough) by Lemma~\ref{lm-trunc}
(1), we obtain
$$
\mathcal{P}_0\ll q^{\eps}Z^4\frac{m}{rl}|C(0,p_1,p_1,l,l,r,m)|\ll
(qr)^{\eps}Z^4
$$
by the last part of Lemma~\ref{lm-delta} (4).
\par
On the other hand, if~$p_1\not=p_2$, we have~$\Delta\not=0$ hence
$$
\mathcal{P}_0\ll
q^{\eps}Z^4\frac{m}{rl}
\Bigl(\frac{rl}{m},p_2-p_1\Bigr)
$$
by Lemma~\ref{lm-delta} (1) (which shows that the sum
$C(0,p_1,p_2,l_1,l_2,r,m)$ is zero unless~$l_1=l_2$) and (2).
\par

\subsubsection{Estimation of $\mathcal{P}_1$}
Using Lemma~\ref{lm-trunc} (2) for a
suitable value of~$j$, we obtain first
$$
\mathcal{P}_{1}= \frac{1}{r[l_1,l_2]/m}\sum_{1\leq |n|\leq
  q^{\delta}Z\frac{r[l_1,l_2]}{mY}} C(n,p_1,p_2,l_1,l_2,r,m)
\what{\mathcal{W}}_Y\Bigl(\frac{nY}{r[l_1,l_2]/m}\Bigr) +O(q^{-1})
$$
for any~$\eps>0$ if~$\delta$ is chosen small enough.  Then, by
Lemma~\ref{lm-delta} and Lemma~\ref{lm-trunc} (3), we deduce that
\begin{align*}
  \mathcal{P}_1
  &\ll q^{\eps+2\delta}\frac{1}{(r[l_1,l_2]/m)^{1/2}}
    \sum_{1\leq |n|\leq q^{\delta}Z\frac{r[l_1,l_2]}{mY}}
    \frac{(\Delta,n,rl_1/m,rl_2/m)}{(n,rl_1/m,rl_2/m)^{1/2}}
    \frac{Z^{2}m^2Yq^3}{X^2P^3}
  \\
  &\ll q^{2\eps}Z^3\Bigl(\frac{r[l_1,l_2]1}{m}\Bigr)^{1/2}\frac{m^2q^3}{X^2P^3}
\end{align*}
if $\delta<\eps/2$.

\section{Estimate of $\mcO$}\label{sec:O}

In this section, we bound the sum~$\mcO$ defined
in~(\ref{eq-m2}). Our goal is:

\begin{proposition}\label{corollary-for-O}
  Let~$\eta>0$ be a parameter such that \eqref{XZlower} holds.
  Let~$\eps>0$. If~$\delta$ is a sufficiently small positive real
  number and if $P,L,X$ satisfy
  \begin{equation}\label{eqboundsPHLX}
    XP\leq q^2L,\quad q^{1+\delta}L^2<X/8,\quad q^{\delta}PHL<q/8,
  \end{equation}
  then we have
  \begin{equation}\label{final-bound-for-O}
    \mcO\ll q^{\eps}\norm{\what{K}}_{\infty}\frac{qX^{1/2}}{P},
  \end{equation}
  where the implied constant depends on~$\varphi$ and~$\eps$.
\end{proposition}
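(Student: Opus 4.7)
The plan is to follow the strategy outlined at the end of Section~\ref{sec-principle}: Cauchy--Schwarz, Poisson summation, and the bilinear Fourier estimate of Proposition~\ref{willprop}. Writing
$$
T(n) := \frac{1}{|\rmP||\rmL|}\sum_{p\in\rmP}\sum_{l\in\rmL}\sum_{h\neq 0}\what W(h/H)\,K(n,hp\bar l),
$$
Cauchy--Schwarz gives
$$
|\mcO|^2 \leq \Bigl(\sum_{n}|\lambda(1,n)|^2 V(n/X)\Bigr)\Bigl(\sum_{n}|T(n)|^2 V(n/X)\Bigr),
$$
and the first factor is~$\ll X^{1+\eps}$ by~\eqref{eq:RS}. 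The task reduces to estimating the second.

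Next I would expand $|T(n)|^2$ as a sextuple sum over $(p_i,l_i,h_i)_{i=1,2}$ and apply the Poisson formula~\eqref{eq-poisson} to the $n$-sum, using that $n\mapsto K(n,h_1)\overline{K(n,h_2)}$ is $q$-periodic. The hypothesis $q^{1+\delta}L^2<X/8$ in~\eqref{eqboundsPHLX} gives $X/q\gg q^{\delta}$, so the rapid decay of~$\what V$ makes every non-zero Poisson frequency contribute $O(q^{-A})$ for any~$A$. Only the zero frequency survives, and a direct calculation from the definition~\eqref{eq-kn} of~$K(\cdot,h)$, crucially using that the Fourier sum in~\eqref{eq-kn} is restricted to $z\in\Fqt$, yields
$$
\sum_{a\in\Fq}K(a,h_1)\overline{K(a,h_2)} = \sum_{z\in\Fqt}|\what K(z)|^2 e_q((h_2-h_1)\bar z) =: K_2(h_2-h_1).
$$

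After substituting $h_i\leadsto h_ip_i\bar l_i$ and introducing
$$
\alpha(m) := \sum_{\substack{p\in\rmP,\,l\in\rmL\\h\neq 0}}\what W(h/H)\,\charfun_{hp\bar l\equiv m\mods{q}},
$$
the remaining quantity takes the form $\sum_{m,m'}\alpha(m)\overline{\alpha(m')}K_2(m'-m)$, to which Proposition~\ref{willprop} (with $A=\Zz/q\Zz$) directly applies, yielding the upper bound $q^{1/2}\|\what{K_2}\|_\infty\|\alpha\|_2^2$. A short DFT computation identifies $\what{K_2}(\xi)=q^{1/2}|\what K(-\bar\xi)|^2$ for $\xi\neq 0$ and $\what{K_2}(0)=0$, so $\|\what{K_2}\|_\infty\leq q^{1/2}\|\what K\|_\infty^2$.

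The decisive step is showing $\|\alpha\|_2^2\ll q^{\eps}PLH$. Using the Schwartz decay of $\what W$ I would truncate to $|h_i|\leq Hq^{\delta}$ at negligible cost, and the hypothesis $q^{\delta}PHL<q/8$ from~\eqref{eqboundsPHLX} then upgrades the congruence $h_1p_1l_2\equiv h_2p_2l_1\mods{q}$ to the integer identity $h_1p_1l_2=h_2p_2l_1$. Since $\rmP$ and $\rmL$ consist of primes in different residue classes mod~$4$, the two sets are disjoint, and unique factorization forces off-diagonal solutions into a one-parameter family $h_1=p_2l_1k$, $h_2=p_1l_2k$ (or analogous subfamilies when exactly one of the pairs $(p_i)$ or $(l_i)$ coincides) with $k\in\Zz\setminus\{0\}$ and $|k|\leq Hq^{\delta}/(PL)$. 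The diagonal contributes~$\ll PLH$ and the off-diagonal families contribute~$\ll PLHq^{\delta}$ in total. Assembling everything and substituting $H=q^2L/(XP)$ gives $\sum_n|T(n)|^2V(n/X)\ll q^{\delta}\|\what K\|_\infty^2\,q^2/P^2$, and taking square roots delivers~\eqref{final-bound-for-O} after absorbing $\delta$ into $\eps$. The main obstacle is precisely this combinatorial count for $\|\alpha\|_2^2$, where both the disjointness of~$\rmP$ and~$\rmL$ and the integer-equation reduction afforded by~\eqref{eqboundsPHLX} are essential; without either, the diagonal/off-diagonal dichotomy would break and the final exponent would degrade.
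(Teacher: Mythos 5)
Your proposal is correct and follows essentially the same route as the paper: Cauchy--Schwarz on the $n$-variable, Poisson with only the zero frequency surviving, identification of the correlation function~$K_2$, the bilinear bound of Proposition~\ref{willprop}, and the integer-lifting argument for the count of coincidences $h_1p_1\bar l_1\equiv h_2p_2\bar l_2\pmod q$. The only genuine deviations are cosmetic: the paper first splits $\mcO=\mcO_1+\mcO_2$ according to whether $l\mid h$ and treats $\mcO_1$ by specialising the argument to $L=1$ and $H/l$, whereas you keep a single sum and let the divisor-type/parametrisation count absorb both cases (which works, since the hypothesis $q^{\delta}PHL<q/8$ applies to the congruence regardless of coprimality of $h$ and $l$); and the paper uses a non-negative majorant $T\geq |V|$ in the Cauchy--Schwarz step rather than $V$ itself. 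One small misattribution: the suppression of the nonzero Poisson frequencies requires $\what{V}(hX/q)\ll (qZ/X)^{A}$ to be small, which follows from~\eqref{XZlower} (i.e.\ $X/Z\geq q^{1+\eta}$, so $qZ/X\leq q^{-\eta}$), not from the condition $q^{1+\delta}L^{2}<X/8$ you cite — the latter only gives $X/q\gg q^{\delta}$, which is insufficient when $Z$ is as large as $q^{1/4}$; since~\eqref{XZlower} is among the hypotheses, this is a citation slip rather than a gap.
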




We start by decomposing $\mcO$ into
$$
\mcO=\mcO_1+\mcO_2
$$
according to whether the prime $l$ divides $h$ or not, in other words
\begin{align*}
  \mcO_1
  &=\frac{1}{|\rmP||\rmL|}\sum_{p\in\rmP}\sum_{l\in\rmL}
    \sum_{h\not=0}\what{W}\Bigl(\frac{hl}{H}\Bigr)
    \sum_{n\geq 1}\lambda(1,n)K(n,hp)V\Bigl(\frac{n}{X}\Bigr)\\
  \mcO_2
  &=\frac{1}{|\rmP||\rmL|}
    \sum_{p\in\rmP}\sum_{l\in\rmL}
    \sum_\stacksum{h\neq 0}{(h,l)=1}
    \what{W}\Bigl(\frac{h}{H}\Bigr)
    \sum_{n\geq 1}\lambda(1,n)K(n,hp\ov l)V\Bigl(\frac{n}{X}\Bigr).
\end{align*}

Both of these sums will be handled in a similar way in the next two
subsections, beginning with the most difficult one.

\subsection{Bounding $\mcO_2$}

In the sum $\mcO_2$, we first use the bound
$$
\what{W}(x)\ll (1+|x|)^{-A}
$$
for any~$A\geq 1$ and~$x\in\Rr$, and
$$
\sum_{n\geq 1}\lambda(1,n)K(n,hp\ov l)V\Bigl(\frac{n}{X}\Bigr)\ll
X^{1+\eps}q^{1/2}\norm{\what{K}}_{\infty}\ll
q^{5/2+2\eps}\norm{\what{K}}_{\infty}
$$
for any~$\eps>0$ (by~(\ref{eq:RS}) and discrete Fourier inversion) to
truncate the sum over~$h$ to~$|h|\leq q^{\delta}H$, for
some~$\delta>0$ that may be arbitrarily small.

Let~$T\geq 0$ be a smooth function with compact support such that
$T(x)=\norm{V}_{\infty}$ for $x\in [1/2,3]$ and such that~$T$
satisfies \eqref{eq:Vprop} with a fixed value of~$Z$. We then
have~$|V|\leq T$.

In the sum~$\mcO_2$, we split the $h$-sum into $O(\log q)$ dyadic
sums. We then apply the Cauchy--Schwarz inequality to smooth the
$n$-variable, and we obtain
$$
\mcO_2\ll \frac{\log^3 q}{PL} \Bigl(\sum_{n\sim
  X}|\lambda(1,n)|^2\Bigr)^{1/2} \max_{1\leq H'\leq
  q^{\delta}H}\mathcal{R}_{H'}^{1/2}\ll \frac{X^{1/2}\log^3 q}{PL}
\max_{1\leq H'\leq q^{\delta}H}\mathcal{R}_{H'}^{1/2},
$$
by~(\ref{eq:RS}) again, where
$$
\mathcal{R}_{H'}= \sumsum_{p_1,h_1,l_1,p_2,h_2,l_2} \sum_{n\geq
  1}K(n,h_1p_1\ov l_1)\ov{K(n,h_2p_2\ov l_2)}
\what{W}\Bigl(\frac{h_1}{H}\Bigr)
\ov{\what{W}\Bigl(\frac{h_2}{H}\Bigr)}
T\Big(\frac{n}X\Bigr),
$$
with the variables in the sums constrained by the conditions
$$
p_i\in\rmP,\quad l_i\in\rmL,\quad
H'<h_i\leq 2H',\quad (l_i,h_i)=1.
$$

For $x\in\Fq$, we define
\begin{equation}\label{nudef}
  \nu(x)=
  \sum_{\substack{
      (p,h,l)\in \rmP\times[H',2H'[  \times\rmL,\\
      (h,l)=1\\
      ph\ov l\equiv x\mods q}}
  \what{W}\Bigl(\frac{h}{H}\Bigr)
\end{equation}
so that we have
\begin{equation}\label{eq:nsum}
  \mathcal{R}_{H'}=\sumsum_{x_1,x_2\in\Fq}
  \nu(x_1)\nu(x_2)\sum_{n\geq 1}K(n,x_1)\ov{K(n,x_2)}
  T\Bigl(\frac{n}X\Bigr).
\end{equation}

We apply the Poisson summation formula~(\ref{eq-poisson}) for the sum
over~$n$. This results in the formula
$$
\sum_{n\geq 1}K(n,x_1)\ov{K(n,x_2)}T\Bigl(\frac{n}X\Bigr)
=\frac{X}{\sqrt{q}}
\sum_{h\in\Zz} \Bigl(\frac{1}{\sqrt{q}}
\sum_{n\mods{q}}K(n,x_1)\ov{K(n,x_2)}e\Bigl(\frac{nh}{q}\Bigr)
\Bigr)
\what{T}\Bigl(\frac{hX}{q}\Bigr).
$$

Observe that for any~$h\in\Zz$, we have
$$
\frac{1}{\sqrt{q}}
\sum_{n\mods{q}}K(n,x_1)\ov{K(n,x_2)}e\Bigl(\frac{nh}{q}\Bigr)
=\frac{1}{\sqrt{q}}\sum_{u\mods{q}} \what K(u,x_1)\ov{\what
  K(u+h,x_2)}
$$
where $ \what K(u,x)$ is defined as in \eqref{whatKzdef}. In
particular, this quantity is bounded by
$q^{1/2}\norm{\what{K}}_{\infty}^2$.

Now, for all $h\not=0$ and all~$A\geq 1$, we have
$$
\what{T}\Bigl(\frac{hX}{q}\Bigr)\ll_A
\Bigl(\frac{qZ}{hX}\Bigr)^A\leq \Bigl(\frac{qZ}{X}\Bigr)^A\leq
q^{-A\eta},
$$
by~(\ref{XZlower}), where the implied constant depends on~$A$. Hence,
taking~$A$ large enough in terms of~$\eta$, the contribution of
all~$h\not=0$ to the sum over~$n$ is
$\ll \norm{\what{K}}_{\infty}^2q^{-5}$, and the total contribution
to~$\mathcal{R}_{H'}$ is (using very weak bounds on~$\nu(x)$)
$$
\ll \norm{\what{K}}_{\infty}^2 q^{-3}(PHL)^2\ll
\norm{\what{K}}_{\infty}^2q^{-1}
$$
by~(\ref{eqboundsPHLX}).


The remaining contribution to~$\mathcal{R}_{H'}$ from the
frequency~$h=0$ is
$$
\frac{X}{\sqrt{q}}\sumsum_{x_1,x_2\in\Fq}
\nu(x_1)\nu(x_2)\frac{1}{\sqrt{q}}\sum_{u\in\Fq} \what
K(u,x_1)\ov{\what K(u,x_2)}\what{T}(0).
$$

\begin{lemma}
  For any~$(x_1,x_2)\in\Fq\times\Fq$, we have
  $$
  \frac{1}{\sqrt{q}}\sum_{u\in\Fq}
  \what K(u,x_1)\ov{\what K(u,x_2)}=
  L(x_1-x_2)
  $$
  where
  $$
  L(x)=\frac{1}{\sqrt{q}}\sum_{u\in\Fqt}|\what{K}(u)|^2e_q(-\bar{u}x)
  +\frac{1}{\sqrt{q}}|\what{K}(0)|^2.
  $$
  Moreover, we have
  $$
  \what{L}(h)=|\what{K}(0)|^2\delta_{h\equiv 0\mods{q}}+
  |\what{K}(\bar{h})|^2\delta_{h\not\equiv 0\mods{q}},
  $$
  and in particular~$|\what{L}(h)|\leq \norm{\what{K}}_{\infty}^2$ for
  all~$h\in\Fq$.
\end{lemma}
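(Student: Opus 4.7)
The plan is direct: substitute the piecewise definition~\eqref{whatKzdef} of $\what K(u,x)$ into the sum, split the $u$-variable according to whether $u\equiv 0\mods q$ or not, simplify the product to recognize the claimed formula, and then compute $\what L$ by orthogonality.

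For the first assertion, I would separate the contribution of $u=0$, where $\what K(0,x_1)\ov{\what K(0,x_2)}=|\what K(0)|^2$ independent of $(x_1,x_2)$, yielding $\frac{1}{\sqrt q}|\what K(0)|^2$. On the range $u\in\Fqt$, the two phases combine as
$$
\what K(u,x_1)\ov{\what K(u,x_2)}=|\what K(u)|^2 e_q\bigl(-(x_1-x_2)\ov u\bigr),
$$
since $e_q(-x_1\ov u)\ov{e_q(-x_2\ov u)}=e_q(-(x_1-x_2)\ov u)$. Summing over $u\in\Fqt$ and dividing by $\sqrt q$ gives exactly the first term of $L(x_1-x_2)$, proving the identity.

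For the computation of $\what L$, I would substitute the formula for $L$ and use the orthogonality relation $\frac{1}{q}\sum_{x\in\Fq}e_q(ax)=\delta_{a\equiv 0\mods q}$. The constant term $\frac{1}{\sqrt q}|\what K(0)|^2$ contributes $|\what K(0)|^2\delta_{h\equiv 0\mods q}$. In the other term, exchanging the order of summation leads to
$$
\frac{1}{q}\sum_{u\in\Fqt}|\what K(u)|^2\sum_{x\in\Fq}e_q\bigl((h-\ov u)x\bigr)=\sum_{u\in\Fqt}|\what K(u)|^2\delta_{\ov u\equiv h\mods q}.
$$
If $h\equiv 0\mods q$ this is empty; otherwise the unique $u=\ov h\in\Fqt$ contributes, yielding $|\what K(\ov h)|^2$. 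Assembling both pieces gives the stated formula for $\what L(h)$, and the bound $|\what L(h)|\leq\|\what K\|_\infty^2$ is then immediate.

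There is no real obstacle here; the only minor care needed is to keep the two branches of the definition of $\what K(u,x)$ straight and to remember that $\ov u$ denotes the inverse modulo $q$, which is only defined for $u\in\Fqt$. The identity is essentially a Plancherel-type computation, reflecting the fact that, up to the exceptional value at $u=0$, the function $x\mapsto\what K(u,x)$ is a character of the additive group of $\Fq$.
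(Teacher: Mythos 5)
Your proof is correct and is precisely the ``straightforward computation'' that the paper invokes in its one-line proof: splitting $u=0$ from $u\in\Fqt$, collapsing the phases via $e_q(-x_1\ov u)\,\ov{e_q(-x_2\ov u)}=e_q(-(x_1-x_2)\ov u)$, and then computing $\what L$ by orthogonality with the change of variable $u=\ov h$. No issues.
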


\begin{proof}
  The first formula is an immediate consequence of the
  definition~(\ref{whatKzdef}), and the second results from a
  straightforward computation.
\end{proof}

\begin{lemma}
  We have
  $$
  \norm{\nu}_2^2=\sum_{x\in\Fq}\nu(x)^2\ll q^{\eps+\delta}PHL
  $$
  for any~$\eps>0$.
\end{lemma}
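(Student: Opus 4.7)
The plan is to expand $\norm{\nu}_2^2$ as a sextuple sum over pairs of triples $(p_i,h_i,l_i)\in \rmP\times[H',2H')\times\rmL$ with $(h_i,l_i)=1$, subject to the congruence $p_1 h_1 \ov{l_1}\equiv p_2 h_2 \ov{l_2}\mods q$, or equivalently (after multiplying by $l_1 l_2$) $p_1 h_1 l_2\equiv p_2 h_2 l_1\mods q$. The weight $\what W(h_1/H)\what W(h_2/H)$ is bounded pointwise by a constant (with rapid decay when $h_i\gg H$, which is favourable), so the problem reduces to counting solutions of the congruence.

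The first key step will be to upgrade this congruence to an equality of integers. Each side $p_i h_i l_{3-i}$ is bounded by $(2P)(2H')(2L)\leq 8q^{\delta}PHL$, and the third inequality of~\eqref{eqboundsPHLX} forces $8q^{\delta}PHL<q$. Hence we have the integer equation
$$
p_1 h_1 l_2 = p_2 h_2 l_1.
$$

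The second step is a divisibility argument that exploits both the coprimality condition $(h_i,l_i)=1$ and the residue classes modulo~$4$ built into $\rmP$ and $\rmL$, which ensure $\rmP\cap\rmL=\emptyset$. Indeed, $l_1$ divides $p_1 h_1 l_2$; it cannot equal $p_1$ (wrong residue class modulo~$4$), and cannot divide $h_1$ by coprimality, so primality forces $l_1=l_2=:l$. The equation collapses to $p_1 h_1 = p_2 h_2$. Either $p_1=p_2$, in which case $h_1=h_2$; or $p_1\neq p_2$, and then $p_1\mid h_2$, $p_2\mid h_1$, so there exists a unique positive integer $m$ with $h_1=p_2 m$ and $h_2=p_1 m$.

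Finally I would count each case. The diagonal case contributes at most $|\rmP|\cdot|\rmL|\cdot 2H'\ll q^{\delta}PHL$. In the off-diagonal case the tuple is determined by $(l,p_1,p_2,m)$, and the constraint $h_1,h_2\in[H',2H')$ confines $m$ to an interval of length $\ll H'/P$, so this count is $\ll |\rmL|\cdot|\rmP|^2\cdot(H'/P)\ll q^{\delta}PHL$. Summing the two and absorbing constants and the mild logarithmic factors into $q^{\eps}$ yields the claimed estimate $\norm{\nu}_2^2\ll q^{\eps+\delta}PHL$. I do not anticipate any substantive obstacle here: the argument is a lattice-point count, and the disjoint residue classes modulo~$4$ for $\rmP$ and $\rmL$ appear to have been chosen precisely to make the divisibility step in the second paragraph go through cleanly.
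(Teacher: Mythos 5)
Your proof is correct and begins exactly the same way as the paper's: both lift the congruence $p_1h_1\ov{l_1}\equiv p_2h_2\ov{l_2}\mods q$ to the integer identity $p_1h_1l_2=p_2h_2l_1$ using the size constraint $q^{\delta}PHL<q/8$ from \eqref{eqboundsPHLX}. Where you diverge is the endgame. The paper stops there: it observes that once $(p_1,h_1,l_2)$ is fixed, the integer $N=p_1h_1l_2<q$ is fixed, and any admissible $(p_2,h_2,l_1)$ is a factorization of $N$ into three (or, strictly, two prime and one arbitrary) parts, of which there are $\ll q^{\eps}$ by the divisor bound. This gives $\ll q^\eps P H' L\leq q^{\eps+\delta}PHL$ with no use of the residue classes mod~$4$ or the coprimality $(h_i,l_i)=1$. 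You instead exploit both of those features to parametrize the solution set explicitly ($l_1=l_2$, then either the diagonal $p_1=p_2,h_1=h_2$ or the off-diagonal $h_1=p_2m$, $h_2=p_1m$) and count directly. Your computation is sound and yields the same conclusion; it is a somewhat more structural argument that avoids the divisor function, at the cost of requiring hypotheses that, for this particular lemma, the paper does not actually need.

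One small correction to your closing speculation: the disjointness of $\rmP$ and $\rmL$ enforced by the residue classes mod~$4$ was not built in for the sake of this counting lemma. It is used earlier, in Lemma~\ref{lm-delta}~(4), to conclude that the discriminant $\Delta=q(l_2^2p_2-l_1^2p_1)/(l_1,l_2)^2$ vanishes only when $p_1=p_2$ and $l_1=l_2$; that is the step in the treatment of~$\mcF$ where the mod~$4$ splitting genuinely earns its keep.
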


\begin{proof}
From the last condition in~(\ref{eqboundsPHLX}), we have the
implications
\begin{equation}\label{PHLbound}
  h_2p_2\ov l_2=h_1p_1\ov l_1\mods q\Longleftrightarrow
  l_1h_2p_2\equiv l_2h_1p_1\mods q\Longleftrightarrow l_1h_2p_2=l_2h_1p_1.
\end{equation}
Therefore, if $(p_1,h_1,l_2)$ are given, the number of possibilities
for $(p_2,h_2,l_1)$ is $\ll q^{\eps}$ for any~$\eps>0$.
The bound
$$
\sum_{x\in\Fqt}\nu(x)^2\ll q^{\eps}PH'L\leq q^{\eps+\delta}PHL
$$
follows immediately.
\end{proof}

We can now combine these two lemmas with Proposition~\ref{willprop} to
deduce that
\begin{align*}
  \frac{X}{\sqrt{q}}\Bigl|\sumsum_{x_1,x_2\in\Fq}
  \nu(x_1)\nu(x_2)\frac{1}{\sqrt{q}}\sum_{u\in\Fq}
  \what K(u,x_1)\ov{\what K(u,x_2)}\what{T}(0)\Bigr|
  &\leq X\norm{\what{L}}_{\infty}\norm{\nu}_2^2|\what{T}(0)|\\
  &\ll q^{\eps}\norm{\what{K}}_{\infty}^2XPHL
\end{align*}
for any~$\eps>0$, by taking~$\delta$ small enough in terms of~$\eps$.
Hence we obtain
\begin{equation}\label{diagonal1total}
  \mcO_2\ll q^{\eps}\norm{\what{K}}_{\infty}X\Bigl(\frac{H}{PL}\Bigr)^{1/2}\ll
  q^{1+\eps}\norm{\what{K}}_{\infty}\frac{X^{1/2}}{P}.
\end{equation}

\subsection{Bounding $\mcO_1$ and end of the proof of Proposition~\ref{corollary-for-O}}

The treatment of $\mcO_1$ is similar to that of~$\mcO_2$, but simpler,
so we will be brief. We have
$$
\mcO_1=\frac{1}{|\rmL|}\sum_{l\in\rmL}\frac{1}{|\rmP|}
\sum_{p\in\rmP}\sum_{h\not=0} \what{W}\Bigl(\frac{h}{H/l}\Bigr)
\sum_{n\geq 1}\lambda(1,n)K(n,hp)V\Bigl(\frac{n}{X}\Bigr).
$$
We bound the sum over~$p$ for each individual~$l\in\rmL$, with
$h\ll H/l\asymp H/L$, by repeating the arguments of the previous
section with $H$ replaced by $H/l$ and $L$ replaced by $1$. We
obtain
\begin{equation}\label{O1bound}
  \mcO_1\ll \norm{\what{K}}_{\infty}
  q^{\eps}X\Bigl(\frac{H}{PL}\Bigr)^{1/2}
  \ll q^{1+\eps}\norm{\what{K}}_{\infty}\frac{X^{1/2}}{P}
\end{equation}
for any~$\eps>0$, as in the previous case.

Finally, since~$\mcO=\mcO_1+\mcO_2$, this bound combined
with~(\ref{diagonal1total}) implies Proposition~\ref{corollary-for-O}.


\section{End of the proof}

We can now finish the proof of our main theorem.  We recall that $X,Z$
are such that
  \begin{equation}
    \label{eqcondZXq}
Z^{2/3}q^{4/3}\leq X \leq Z^{-2}q^{2}.
\end{equation}
In particular $Z\leq q^{1/4}$ and
$$X\geq Z^{2/3}q^{4/3}\geq Zq^{1+1/4}$$
therefore \eqref{XZlower} holds for~$\eta=1/4$.

Assuming that the conditions \eqref{eqboundsPHLX} hold,
combining~\eqref{an-identity}, Proposition~\ref{proposition-for-F} and
Proposition~\ref{corollary-for-O}, we deduce the estimate
$$
S_V(K,X) \ll
q^{\eps}\norm{\what{K}}_{\infty}
\Biggl(\frac{Z^{2}X^{3/2}P}{qL^{1/2}}+Z^{3/2}X^{3/4}(qPL)^{1/4}
+\frac{qX^{1/2}}{P}\biggr)
$$
for any~$\eps>0$.  When $L=Z^{2/3}XP/q^{5/3}$, the first two terms are
equal to $Z^{5/3}XP^{1/2}/q^{1/6}$. For $P=q^{7/9}/(X^{1/3}Z^{10/9})$,
they are also equal to the third term which is
$Z^{10/9}q^{2/9}X^{5/6}$.  Moreover, the conditions \eqref{eqcondZXq}
and~$Z\leq q^{1/4}$ imply then by simple computations that
$$
1\leq P,\ 1\leq L,\  L\leq P^4,\ XP\leq q^2L
$$
(for instance, $X^3Z^{10}\leq Z^{10}(q^2/Z^2)^3=Z^4q^6\leq q^7$
gives~$P\geq 1$), and then we get
$$
q^{1+\delta}L^2<\frac{X}{8}
$$
for~$\delta=1/18$ provided~$q$ is large enough (since
$qL^2=q^{-7/9}Z^{-8/9}X^{4/3}\leq X(X^{1/3}q^{-7/9})\leq Xq^{-1/9}$
using~$X\leq q^2$). By~(\ref{eq-H}), this also implies
that~$q^{\delta}PHL<q/8$. Hence this choice of the parameters
satisfies~(\ref{eqboundsPHLX}). We finally conclude that
$$
S_V(K,X) \ll \norm{\what{K}}_{\infty}
Z^{10/9}q^{2/9+\eps}X^{5/6}
$$
for any~$\eps>0$.

\section{Applications}

In this section, we prove 
Corollaries~\ref{cor-average}, \ref{cor-gl2}.

\subsection{ Proof of Corollary~\ref{cor-average}} Applying the approximate functional
equation for~$L(\varphi\otimes\chi,s)$ in balanced form, we
immediately express the first moment
$$
\frac{1}{q-1} \sum_{\chi\mods{q}}M(\chi)L(\vphi\otimes\chi, 1/2)
$$
in terms of the sums
$$
\frac{1}{\sqrt{q}}
\sum_{n\geq 1} \frac{\lambda(1,n)}{\sqrt{n}}
K(n)V\Bigl(\frac{n}{q^{3/2}}\Bigr)
$$
and
$$
\frac{1}{\sqrt{q}}\sum_{n\geq 1}
\frac{\overline{\lambda(1,n)}}{\sqrt{n}}
L(n)V\Bigl(\frac{n}{q^{3/2}}\Bigr),
$$
for suitable test functions satisfying~(\ref{eq:Vprop}) for~$Z=1$,
where
$$
K(n)=\frac{q^{1/2}}{q-1}\sum_{\chi\mods{q}}M(\chi){\chi(n)},\quad
L(n)=\frac{q^{1/2}}{q-1}\sum_{\chi\mods{q}}\tau(\chi)^3M(\chi)\ov{\chi(n)},
$$
in terms of the normalized Gauss sum~$\tau(\chi)$. An elementary
computation shows that this function~$L$ coincides with the function
in the statement of Corollary~\ref{cor-average}. Since moreover
the~$\overline{\lambda(1,n)}$ are the Hecke-eigenvalues of the dual
cusp form~$\widetilde{\varphi}$, the corollary follows from
Theorem~\ref{thm:main} applied to~$K$ and~$L$.

\begin{remark}\label{lastremark}
  (1) If
$$
M(\chi)=\frac{1}{\sqrt{q}}\sum_{x\in\Fqt}K(x)\ov{\chi(x)}
$$
is the discrete Mellin transform of the trace function~$K$ of a
Fourier sheaf~$\mcF$ that is a middle-extension sheaf on~$\Gg_m$ of
weight~$0$, and if no sheaf of the
form~$[x\mapsto x^{-1}]^*\dual(\HYPK_3)$ is among the geometrically
irreducible components of~$\mcF$, then 
both~$\norm{\what{K}}_{\infty}$ and~$\norm{\what{L}}_{\infty}$ are
bounded in terms of the conductor of~$\mcF$ only and we obtain
$$
\frac{1}{q-1} \sum_{\chi\mods{q}}M(\chi)L(\vphi\otimes\chi,
1/2)\ll q^{2/9+\eps}
$$
for any~$\eps>0$, where the implied constant depends only
on~$\eps$,~$\varphi$ and the conductor of~$\mcF$.
\par
(2) Applying the approximate functional equation in a balanced form
may not always be the best move. For instance, consider the important
special case where $M(\chi)=1$. We are then evaluating the first
moment
\begin{equation}\label{eqfirstmoment}
\frac{1}{q-1}\sum_{\chi\mods q}L(\vphi\otimes\chi,1/2)
\end{equation}
of the central values of the twisted $L$-functions. Then we are
working with the functions
$$
K(n)=q^{1/2}\delta_{n\equiv 1\mods q},\quad L(n)=\Kl_3( n;q),
$$
whose Fourier transforms are bounded by absolute constants. Hence the
above leads to
$$
\frac{1}{q-1} \sum_{\chi\mods{q}}L(\vphi\otimes\chi, 1/2)
\ll q^{2/9 + \eps}
$$
for any~$\eps>0$, where the implied constant depends on~$\varphi$
and~$\eps$.
\par
On the other hand, the approximate functional equation in unbalanced
form yields sums of the shape
$$
\sum_{n\equiv 1\mods q} \frac{\lambda(1,n)}{\sqrt{n}}
V\Bigl(\frac{n}{Yq^{3/2}}\Bigr)
\quad\text{ and }\quad
\frac{1}{\sqrt{q}}\sum_{n\geq 1}
\frac{\overline{\lambda(1,n)}}{\sqrt{n}}
\Kl_3(n;q)V\Bigl(\frac{nY}{q^{3/2}}\Bigr),
$$
for some parameter $Y>0$ at our disposal. Assuming the
Ramanujan--Petersson conjecture for~$\varphi$
and~$\widetilde{\varphi}$, and using Deligne's bound
$|\Kl_3(n;q)|\leq 3$ for $(n,q)=1$, we obtain the much stronger bound
$$
\frac{1}{q-1}\sum_{\chi\mods
  q}L(\vphi\otimes\chi,1/2)=1+(qY)^{\eps}
\bigl({Y^{1/2}}/{q^{1/4}}+q^{1/4}/Y^{1/2}\bigr)\ll
q^{\eps}
$$
for any~$\eps>0$, on choosing $Y=q^{1/2}$.
\par
Note that, again under the Ramanujan--Petersson conjecture
for~$\varphi$ and its dual, we would obtain an \emph{asymptotic
  formula} for the first moment \eqref{eqfirstmoment} \emph{provided}
we could obtain an estimate for $S_V(\Kl_3,X)$ with a power-saving in
terms of~$q$, when $X$ is a bit smaller than $q$. Results of this type
are however currently only known if $\vphi$ is an Eisenstein series
(starting from the work~\cite{FI} of Friedlander and Iwaniec for the
ternary divisor function; see also the papers of Fouvry, Kowalski and
Michel~\cite{FKMd3}, of Kowalski, Michel and Sawin~\cite{KMS} and of
Zacharias~\cite{Zac}).

This illustrates the importance of the problem of obtaining
non-trivial bounds for short sums in Theorem \ref{thm:main}. However,
we expect that much more refined properties of trace functions and
their associated sheaves will be necessary for such a purpose (as
indicated by Remark~\ref{remcor17}).
\end{remark}

\subsection{Proof of Corollary~\ref{cor-gl2}}

The symmetric square~$\varphi$ of~$\psi$ has Hecke eigenvalues
\begin{equation}\label{lambda(n2)}
\lambda(1,n)=\sum_{d^2\mid n}\lambda\Bigl(\frac{n^2}{d^2}\Bigr),
\end{equation}
and hence, by M\"obius inversion, we have
$$
\lambda(n^2)=\sum_{d^2\mid n}\mu(d)\lambda\Bigl(1,\frac{n}{d^2}\Bigr).
$$
We deduce that
$$
\sum_{n\geq 1}\lambda(n^2)K(n)V\Bigl(\frac{n}{X}\Bigr)= \sum_{d\geq
  1}\mu(d) \sum_{n\geq
  1}K(nd^2)\lambda(1,n)V\Bigl(\frac{nd^2}{X}\Bigr).
$$
For
$$
1\leq d\leq \frac{X^{1/2}}{Z^{1/3}q^{2/3}},
$$
we can apply Theorem~\ref{thm:main} to the sum over~$n$ and the
$q$-periodic function $L(n)=K(nd^2)$, with~$X$ replaced
by~$X/d^2$. Since~$q\nmid d$, we have
$\what{L}(x)=\what{K}(\bar{d}^2x)$ for any~$x\in \Zz$, so
that~$\norm{\what{L}}_{\infty}=\norm{\what{K}}_{\infty}$, and we get
\begin{align*}
  \sum_{d\leq X^{1/2}/(Z^{1/3}q^{2/3})}\mu(d) \sum_{n\geq
  1}K(nd^2)\lambda(1,n)V\Bigl(\frac{nd^2}{X}\Bigr)
  &\ll
    \norm{\what{K}}_{\infty}Z^{10/9}q^{2/9+\eps} \sum_{d\geq
    1}\frac{X^{5/6}}{d^{5/3}}
  \\
  &\ll
    \norm{\what{K}}_{\infty}Z^{10/9}q^{2/9+\eps}X^{5/6}
\end{align*}
for any~$\eps>0$.
\par
Since~$V$ has compact support in $[1/2,3]$, the sum over~$n$ is empty
if $d\geq \sqrt{3X}$. Since
$$
\sum_{n\geq 1}K(nd^2)\lambda(1,n)V\Bigl(\frac{nd^2}{X}\Bigr) \ll
\norm{K}_{\infty}\Bigl(\frac{X}{d^2}\Bigr)^{1+\eps}
$$
for any~$\eps>0$, by the Rankin--Selberg bound~(\ref{eq:RS}), we can
estimate the remaining part of the sum as follows:
\begin{align*}
  \sum_{X^{1/2}/(Z^{1/3}q^{2/3})<d\leq \sqrt{3X}}\mu(d) \sum_{n\geq
  1}K(nd^2)\lambda(1,n)V\Bigl(\frac{nd^2}{X}\Bigr)
  &\ll
    \norm{K}_{\infty}X^{1+\eps}
    \sum_{X^{1/2}/(Z^{1/3}q^{2/3})<d\leq
    \sqrt{3X}}\frac{1}{d^{2+2\eps}}
  \\
  &\ll
    \norm{K}_{\infty}X^{1/2+\eps}Z^{1/3}q^{2/3}
\end{align*}
for any~$\eps>0$.


\begin{remark}
  The additional dependency on~$\norm{K}_{\infty}$ seems to be
  unavoidable in Corollary~\ref{cor-gl2}.
\end{remark}

\section{Proof of Corollary~\ref{RScor}}

The proof of Corollary~\ref{RScor} requires additional ingredients
besides Theorem~\ref{thm:main}.  We will be somewhat brief in handling
these additional arguments (especially standard analytic arguments),
since similar computations have been performed in a number of other
papers (e.g.~\cite{FKM2}).

First, in terms of the
Hecke-eigenvalues $\lambda(m,n)$ of the symmetric square~$\psi$
of~$\varphi$, we have the identity
$$
\lambda(n)^2=\sum_{d^2bc=n}\mu(d)\lambda(1,c)
$$
(see~(\ref{lambda(n2)}) and~(\ref{convol})).  Thus we have
$$
\sum_{n\geq 1}\lambda(n)^2K(n)V\Bigl(\frac{n}{X}\Bigr)=
\sum_{d,m,n\geq
  1}\mu(d)\lambda(1,n)K(d^2mn)V\Bigl(\frac{d^2mn}{X}\Bigr).
$$
\par
We bound the contribution of the integers~$n$ divisible by~$q$ using
the Kim--Sarnak bound~\cite{KimSar} for~$\lambda(1,n)$, which shows
that it is
$$
\ll
\|K\|_\infty X^{1+\eps}q^{-1+7/32},
$$
for any~$\eps>0$, and hence is negligigle. We may therefore restrict
the sum on the right-hand side to integers such that $(dmn,q)=1$.

For a fixed value of $d\leq D$, coprime to~$q$, we consider the sum
\begin{equation}\label{eq-Td}
  T_{d,x}=\sum_{m,n\geq
    1}\lambda(1,n)K(d^2mn)V\Bigl(\frac{d^2mn}{X}\Bigr).
\end{equation}
We need to estimate the sum of~$T_{d,x}$ over~$d\geq 1$.

Let $D\geq 1$ be some small parameter to be fixed later. The
contribution of the integers $d> D$ is bounded trivially and is
$$
\sum_{d>D}T_{d,X}\ll \frac{\|K\|_\infty X^{1+\eps}}{D}
$$
for any~$\eps>0$.

We now fix $d\leq D$, coprime to~$q$. We handle the sum~$T_{d,X}$ by a
smooth dyadic partition of unity on the $m$-variable. This reduces the
problem to estimates of $O(\log X)$ sums of the form
\begin{equation}\label{DMsum}
  S_{d,M}=  \sum_{\substack{m,n\geq 1\\(mn,q)=1}}
  \lambda(1,n)K(d^2mn)V\Bigl(\frac{d^2mn}{X}\Bigr)
  W\Bigl(\frac{m}M\Bigr)
\end{equation}
where $W$ is smooth and compactly supported in $[1/2,5/2]$. We
set
$$
N=\frac{X}{d^2M},
$$
so that $n\sim N$ in the sum.

The estimate for~(\ref{DMsum}) splits in three cases, depending on the
size of~$M$.

\subsection{When $M$ is small}

We assume that $\frac{X}{d^2m}\geq \frac{X}{D^2M}\geq
Z^{2/3}q^{4/3}$, or in another words, that
\begin{equation}\label{condition-on-M}
D^2M\leq \frac{X}{Z^{2/3}q^{4/3}}.
\end{equation}
We can then apply Theorem~\ref{thm:main}, and we derive
\begin{align}\nonumber
  S_{d,M}&\ll \|K\|_\infty q^{7/32-1}\frac{X^{1+\eps}}{d^2}
           +\|\widehat K\|_\infty Z^{10/9}q^{2/9+\eps}
           \sum_{m\sim M}\Bigl(\frac{X}{d^2m}\Bigr)^{5/6}\\
         &\ll \|K\|_\infty X^{\eps}q^{7/32-1}\frac{X}{d^2}
           +\|\widehat K\|_\infty X^{\eps}Z^{10/9}
           \Bigl(\frac{X}{d^2}\Bigr)^{5/6}q^{2/9}{M^{1/6}}
\label{bound1}
\end{align}
for any~$\eps>0$ (the first term corresponds to removing the
constraint $(n,q)=1$).

\subsection{When $M$ is in the Fourier range}

If $M\geq q^{1/2}$, then it is
beneficial to apply the Poisson summation formula to the
$m$-variable. As in the previous case, the cost of removing the
condition $(m,q)=1$ is $\ll \|K\|_\infty q^{7/32-1}X^{1+\eps}/d^2$
for~$\eps>0$. The Poisson summation formula implies that
$$
\sum_{m\geq
  1}K(d^2mn)V\Bigl(\frac{d^2mn}{X}\Bigr)W\Bigl(\frac{m}M\Bigr)\ll
\|\what K\|_\infty \Bigl(\frac{M}{q^{1/2}}+q^{1/2}\Bigr)
$$
and therefore
\eqref{DMsum} is bounded by
\begin{equation}\label{bound2}
  S_{d,M}\ll \|K\|_\infty X^{\eps}q^{7/32-1}\frac{X}{d^2}
  +\|\what K\|_\infty X^{\eps}\frac{X}{d^2}
  \Bigl(\frac{1}{q^{1/2}}+\frac{q^{1/2}}{M}\Bigr)
\end{equation}
for any~$\eps>0$.

\subsection{When $M$ is large but not in Fourier range}

If~$M\leq q^{1/2}$, thinking of the prototypical case when
$X\sim q^{3/2}$ and $D$ is close to one, the $n$-sum is of length
close to $q$, so the natural move is to smooth the $n$-sum, and then
use the Poisson summation formula on the resulting sums.

Thus we apply the Cauchy--Schwarz inequality to \eqref{DMsum}, leaving
the $n$ variable outside, namely
\begin{equation}\label{eqCS}
  |S_{d,M}|^2
  \ll\sum_{n\sim X/d^2M}|\lambda(1,n)|^2\times
  \sumsum_\stacksum{m_i\sim
    M}{(m_i,q)=1}\sum_{n\geq 1}K(d^2m_1n)\ov{K(d^2m_2n)}
  V\Bigl(\frac{d^2m_1n}X\Bigr)\ov{V}\Bigl(\frac{d^2m_2n}X\Bigr).
\end{equation}

Here, we have dropped the constraint $(n,q)=1$ on the right-hand side
by positivity, and replaced the expressions $W\Bigl(\frac{m_i}M\Bigr)$
by the summation conditions $ m_i\sim M$.

By the Poisson summation formula, we have
\begin{equation}\label{applypoisson}
  \sum_{n\geq 1}
  K(d^2m_1n)\ov{K(d^2m_2n)}
  V\Bigl(\frac{d^2m_1n}X\Bigr)\ov{V}\Bigl(\frac{d^2m_2n}X\Bigr)
  =\frac{N}{q^{1/2}}\sum_{h\in\Zz}
  \what{K}_{(2)}(h)\mathcal{W}\Bigl(\frac{h}{q/(X/d^2M)}\Bigr),
\end{equation}
where $\mathcal{W}(y)$ is a smooth function depending on $d,m_1,m_2$,
rapidly decaying as $y\ra\infty$, and
$$
\what{K}_{(2)}(h)=\frac{1}{\sqrt{q}}\sum_{n\in\Ff_q}
K(d^2m_1n)\ov{K(d^2m_2n)}e\Bigl(\frac{nh}{q}\Bigr).
$$
\par
To go further, we use the assumption of Corollary~\ref{RScor} that $K$
is the trace function of a middle-extension $\ell$-adic sheaf~$\mcF$
that is not exceptional. Indeed, from \cite[Theorem 6.3]{FKM2}, we can
deduce that there exists a set $B\subset \Fqt$ such
that $|B|$ is bounded in terms of the conductor of~$\mcF$ only, and
such that whenever
\begin{equation}\label{eqdiag}
  m_1/m_2\mods q\not\in B,
\end{equation}
then we have
$$
\|\what{K}_{(2)}\|_\infty\ll 1
$$
where the implied constant depends on the conductor of~$\mcF$ only.

Returning to \eqref{eqCS}, we apply the bound \eqref{applypoisson} to
the pairs pairs $(m_1,m_2)$ which satisfy \eqref{eqdiag}, and apply
the trivial bound otherwise.

We see then that the contribution to the second factor of \eqref{eqCS}
of the ``diagonal'' pairs not satisfying \eqref{eqdiag} is bounded by
$$
\ll X^{\varepsilon}M\Bigl(\frac{M}q+1\Bigr)\frac{X/M}{d^2}
$$
for any~$\eps>0$, while the contribution of the pairs $(m_1,m_2)$
satisfying \eqref{eqdiag} is bounded by
$$
\ll X^{\varepsilon}M^2\Bigl(\frac{X/M}{d^2q^{1/2}}+q^{1/2}\Bigr),
$$
for any~$\eps>0$, where in both cases the implied constant depends
only on~$\eps$ and on the conductor of~$\mcF$.

Collecting these bounds, we obtain from \eqref{eqCS} the bound
\begin{equation}\label{bound3}
  S_{d,M} \ll
  \frac{X^{1+\varepsilon}}{d^2}
  \Bigl(\frac{1}{M^{1/2}}+\frac{1}{q^{1/4}}+q^{1/4}M^{1/2}
  \frac{d}{X^{1/2}}\Bigr),
\end{equation}
for any~$\eps>0$, where the implied constant depends only on~$\eps$
and on the conductor of~$\mcF$.

\subsection{End of the proof}

Now we can combine the previous bounds.  Let~$\eta>0$ and~$\delta$
with~$0<\delta<1/4$ be parameters to be determined later.

\subsubsection*{-- If $M\leq q^{2\delta}$,} we then apply the bound
\eqref{bound1} (and the dyadic decomposition of~$T_{d,x}$ in a
combination of sums~$S_{d,M}$) to derive
\begin{equation}\label{bound-c}
  \sum_{d\leq D}T_{d,X}\ll X^{1+\eps}q^{7/32-1}
  +Z^{10/9}X^{5/6+\eps}q^{2/9+\delta/3},
\end{equation}
under the condition that
\begin{equation}\label{condition-X}
X\geq Z^{2/3}D^2q^{4/3+2\delta}
\end{equation}
(see \eqref{condition-on-M}).

\subsubsection*{-- If $M\geq q^{1/2+\eta}$,} we apply the bound
\eqref{bound2} and sum over $d\leq D$, to find that
\begin{equation}\label{bound-a}
  \sum_{d\leq D} T_{d,X}
  \ll X^{1+\eps}\Bigl(\frac{1}{q^{1/2}}+\frac{q^{1/2}}{M}\Bigr)
  \ll X^{1+\eps}q^{-\eta}
\end{equation}
in that case.

\subsubsection*{-- If $q^{2\delta}\leq M< q^{1/2+\eta}$,} we then
apply the bound \eqref{bound3} and sum over $d\leq D$, obtaining
\begin{equation}\label{bound-b}
  \sum_{d\leq D}T_{d,X}\ll
  X^{1+\varepsilon}\Bigl(\frac{1}{q^{\delta}}
  +\frac{1}{q^{1/4}}+\frac{q^{1/2+\eta/2}}{X^{1/2}}\Bigr)
  \ll X^{1+\varepsilon}\Bigl(q^{-\delta}+\frac{q^{1/2+\eta/2}}{X^{1/2}}\Bigr).
\end{equation}

This covers all of the ranges for $M$. We now choose $\eta, \delta>0$
such that the bound in \eqref{bound-a} is equal to the second term in
\eqref{bound-b}, and the first term in \eqref{bound-b} is consistent
with the second term in \eqref{bound-c}. That is, we choose
$q^{\eta}=(X/q)^{1/3}$ and
$q^{\delta}=\frac{X^{1/8}}{Z^{5/6}q^{1/6}}$. Therefore we have in all
cases the estimate
$$
\sum_{d\leq D}T_{d,X} \ll
X^{2/3+\eps}q^{1/3}+Z^{5/6}X^{7/8+\eps}q^{1/6}+X^{1+\eps}q^{7/32-1},
$$
for any~$\eps>0$, under the assumption that
$$
X\gg D^{8/3}q^{4/3}Z^{-4/3},
$$
and the implied constant depends only on~$\eps$ and the conductor
of~$\mcF$.

Finally we combine this with the previously noted estimate
$$
\sum_{d>D}T_{d,X}\ll
\frac{\|K\|_\infty X^{1+\eps}}{D}
$$
(recall that for a non-exceptional trace function, we
have~$\|\what{K}\|_{\infty}\ll 1$ where the implied constant depends
only on the conductor of~$\mcF$), to conclude that
$$
\sum_{n\geq 1}\lambda(n)^2K(n)V\Bigl(\frac{n}{X}\Bigr)\ll
X^{2/3+\eps}q^{1/3}+Z^{5/6}X^{7/8+\eps}q^{1/6}+X^{1+\eps}/D.
$$
We take $D=q^{\gamma}$ for some small~$\gamma>0$, and then we have
$$
\sum_{n\geq 1}\lambda(n)^2K(n)V\Bigl(\frac{n}{X}\Bigr)\ll
X^{2/3+\eps}q^{1/3}+Z^{5/6}X^{7/8+\eps}q^{1/6}+X^{1+\eps}q^{-\gamma},
$$
provided that
$$
X\gg q^{4/3+8\gamma/3}/Z^{4/3}, 
$$
where the implied constant depends only on~$\eps$ and the conductor
of~$\mcF$.

This concludes the proof of Corollary \ref{cor2-gl2}.


\begin{bibdiv}

\begin{biblist}

\bib{AHLS}{article}{
  author={Aggarwal, K.},
  author={Holowinsky, R.},
  author={Lin, Y.},
  author={Sun, Q.},
  title={The Burgess bound via a trivial delta method},
  note={\url{arXiv:1803.00542v1}},
  date={2018},
}   

\bib{Blomer}{article}{
   author={Blomer, V.},
   title={Subconvexity for twisted $L$-functions on ${\rm GL}(3)$},
   journal={Amer. J. Math.},
   volume={134},
   date={2012},
   number={5},
   pages={1385--1421},

}
 \bib{CI}{article}{
   author={Conrey, J. B.},
   author={Iwaniec, H.},
   title={The cubic moment of central values of automorphic $L$-functions},
   journal={Ann. of Math. (2)},
   volume={151},
   date={2000},
   number={3},
   pages={1175--1216},
   doi={10.2307/121132},
}

\bib{FKM1}{article}{
   author={Fouvry, {\'E}.},
   author={Kowalski, E.},
   author={Michel, Ph.},
   title={Algebraic twists of modular forms and Hecke orbits},
   journal={GAFA},
   volume={25},
   note={\url{arXiv:1207.0617}},
   date={2015},
   number={2},
   pages={580-657},
 }

 \bib{FKMd3}{article}{
   author={Fouvry, {\'E}.},
   author={Kowalski, E.},
   author={Michel, Ph.},
   title={On the exponent of distribution of the ternary divisor function},
   journal={Mathematika},
   note={\url{arXiv:1304.3199}},
   date={2015},
   volume={61},
   number={1},
   pages={121-144},
 }

\bib{FKM2}{article}{
   author={Fouvry, \'E.},
   author={Kowalski, E.},
   author={Michel, Ph.},
   title={Algebraic trace functions over the primes},
   note={\url{arXiv:1211.6043}},
   journal={Duke Math. Journal},
   date={2014},
   volume={163},
   pages={1683-1736},
   number={9},
 }

 \bib{pisa}{article}{
   author={Fouvry, {\'E}.},
   author={Kowalski, E.},
   author={Michel, Ph.},
   title={Trace functions over finite fields and their applications},
   book={
     series={Colloquio de Giorgi},
     publisher={Springer},
   },
   date={2014},
 }

 \bib{short-sums}{article}{
   author={Fouvry, {\'E}.},
   author={Kowalski, E.},
   author={Michel, Ph.},
   author={Raju, C.},
   author={Rivat, J.},
   author={Soundararajan, K.},
   title={On short sums of trace functions},
   journal={Ann. Inst. Fourier},
   date={2017},
   volume={67},
   pages={423--449},
 }
 
 \bib{FI}{article}{
  author={Friedlander, J.B.},
  author={Iwaniec, H.},
  title={Incomplete Kloosterman sums and a divisor problem},
  note={(with an appendix by
  B. J. Birch and E. Bombieri)},
  journal={Ann. of Math. (2)},
  volume={121},
  date={1985},
  number={2},
  pages={319--350},
}

\bib{Goldfeld}{book}{
   author={Goldfeld, D.},
   title={Automorphic forms and $L$-functions for the group ${\rm
   GL}(n,\Rr)$},
   series={Cambridge Studies in Advanced Mathematics},
   volume={99},
   note={With an appendix by Kevin A. Broughan},
   publisher={Cambridge University Press, Cambridge},
   date={2006},
   pages={xiv+493},
}

\bib{HMQ}{article}{
   author={Holowinsky, R.},
   author={Munshi, R.},
   author={Qi, Z.},
   title={Character sums of composite moduli and hybrid subconvexity},
   conference={
      title={Advances in the theory of automorphic forms and their
      $L$-functions},
   },
   book={
      series={Contemp. Math.},
      volume={664},
      publisher={Amer. Math. Soc., Providence, RI},
   },
   date={2016},
   pages={135--148},
 }

 \bib{HN}{article}{
 author={Holowinsky, R.},
 author={Nelson, P.},
 title={Subconvex bounds on $\GL(3)$ via degeneration to frequency zero},
 journal={Math. Ann.},
 volume={372},
 date={2018},
 number={1-2},
 pages={299--319},
}   

\bib{iwaniec}{book}{
   author={Iwaniec, H.},
   title={Topics in classical automorphic forms},
   series={Graduate Studies in Mathematics},
   volume={17},
   publisher={American Mathematical Society, Providence, RI},
   date={1997},
   pages={xii+259},
}

\bib{ESDE}{book}{
   author={Katz, N. M.},
   title={Exponential sums and differential equations},
   series={Annals of Mathematics Studies},
   volume={124},
   publisher={Princeton University Press},
   address={Princeton, NJ},
   date={1990},
}


\bib{KimSar}{article}{
   author={Kim, Henry H.},
   author={Sarnak, Peter}
   title={Refined estimates towards the Ramanujan and Selberg conjectures}
   note={Appendix to H. Kim, Functoriality for the exterior square of ${\rm GL}_4$ and the
   symmetric fourth of ${\rm GL}_2$},
   journal={J. Amer. Math. Soc.},
   volume={16},
   date={2003},
   number={1},
   pages={139--183},
}

\bib{KMS}{article}{
   author={Kowalski, Emmanuel},
   author={Michel, Ph.},
   author={Sawin, Will},
   title={Stratification and averaging for exponential sums: Bilinear forms with generalized Kloosterman sums},
   journal={Annali della Scuola Normale Superiore di Pisa (to appear)}
 note={\url{arXiv:1802.09849}},
   date={2018}
}

\bib{Lin}{article}{
   author={Lin, Y.},
   title={Bounds for twists of $\GL(3)$ $L$-functions},
   note={\url{arXiv:1802.05111}},
   date={2018},
} 

\bib{Miller}{article}{
   author={Miller, S. D.},
   title={Cancellation in additively twisted sums on ${\rm GL}(n)$},
   journal={Amer. J. Math.},
   volume={128},
   date={2006},
   number={3},
   pages={699--729},
}

\bib{Molteni}{article}{
   author={Molteni, Giuseppe},
   title={Upper and lower bounds at $s=1$ for certain Dirichlet series with
   Euler product},
   journal={Duke Math. J.},
   volume={111},
   date={2002},
   number={1},
   pages={133--158},
}

\bib{Munshi}{article}{
   author={Munshi, R.},
   title={The circle method and bounds for $L$-functions---IV: Subconvexity
   for twists of $\rm GL(3)$ $L$-functions},
   journal={Ann. of Math. (2)},
   volume={182},
   date={2015},
   number={2},
   pages={617--672},
}

\bib{Munshi1}{article}{
   author={Munshi, Ritabrata},
   title={Twists of $\GL(3)$ $L$-functions},
   note={\url{arXiv:1604.08000}},
   date={2016},
 }
 
  \bib{PY}{article}{
   author={Petrow, Ian},
   author={Young, Matthew},
   title={The Weyl bound for Dirichlet $L$-functions of cube-free conductor},
   note={\url{arXiv:1811.02452}},
   date={2018},
 }

 \bib{SZ}{article}{
   author={Sun, Qingfeng},
   author={Zhao, Rui},
   title={Bounds for ${\rm GL}_3$ $L$-functions in depth aspect},
   journal={Forum Math.},
   volume={31},
   date={2019},
   number={2},
   pages={303--318},
}

\bib{Zac}{article}{
     author = {Zacharias, Rapha\"el},
     title = {Simultaneous non-vanishing for Dirichlet $L$-functions},
     journal = {Annales de l'Institut Fourier},
     publisher = {Association des Annales de l'institut Fourier},
     volume = {69},
     number = {4},
     year = {2019},
     pages = {1459-1524},
    }

 \bib{FZ}{article}{
 author={Zhou, Fan},
 title={The Voronoi formula on $GL(3)$ with ramification},
 note={\url{arXiv:1806.10786}},
 date={2018},
}

\end{biblist}

\end{bibdiv} 

\end{document}